\documentclass[submission,copyright,creativecommons]{eptcs}
\providecommand{\event}{AiML 2026} 

\usepackage{aiml26}

\usepackage{iftex}

\ifpdf
  \usepackage{underscore}         
  \usepackage[T1]{fontenc}        
\else
  \usepackage{breakurl}           
\fi

\usepackage{amsmath,amsthm}
\usepackage{amssymb}
\usepackage{comment}
\usepackage{amsmath,amsthm,amsfonts,amssymb,bm,mathtools,mathrsfs}
\usepackage{enumitem}
\usepackage{hyperref}

\usepackage{multirow}

\usepackage{tikz,tikz-cd}
\usepackage{quiver}

\newcommand{\sset}{\subseteq}
\newcommand{\rset}{\supseteq}

\renewcommand{\phi}{\varphi}

\title{Conditionals and Modalities in Constructive Quantum Logics}
\author{Juan P. Aguilera
\institute{TU Wien, Austria}
\email{aguilera@logic.at}
\and
Guillaume Massas
\institute{Chapman University
California, USA}
\email{massas@chapman.edu}
}

\newcommand{\titlerunning}{Conditionals and Modalities in Constructive Quantum Logics}
\newcommand{\authorrunning}{J.P. Aguilera \& G. Massas}

\hypersetup{
  bookmarksnumbered,
  pdftitle    = {\titlerunning},
  pdfauthor   = {\authorrunning},
  pdfsubject  = {constructive quantum logic},               
  pdfkeywords = {Intuitionistic Logic, Quantum Logic, Fundamental Logic} 
}

\begin{document}
\maketitle
\begin{abstract}
We investigate logics that generalize both intuitionistic logic and quantum logic. In earlier work, we introduced Ex-logic, an extension of Holliday's fundamental logic that coincides with the intersection of orthologic and the implication-free fragment of intuitionistic logic. In this paper, we add an implication connective to Ex-logic and axiomatize iEx-logic, the intersection of full intuitionistic logic and orthomodular logic with the implication connective interpreted as the Sasaki hook $a\to b:= \lnot a \vee (a\wedge b)$. As a consequence, we obtain a characterization of the lattice of logics extending iEx-logic as the product of the lattice of intermediate logics and the lattice of orthomodular logics. We also explore the robustness of our algebraic approach by briefly discussing extensions of iEx-logic with modal operators.
\end{abstract}

\section{Introduction}
Recently introduced by Holliday \cite{Ho23}, fundamental logic is a propositional logic that generalizes both constructive reasoning and quantum reasoning. However, the intersection of the consequence relations on the set of formulas in the signature $\{\land,\lor,\neg\}$ determined by intuitionistic logic and orthologic respectively is a proper superset of the consequence relation determined by fundamental logic. Answering a question of Holliday, it was shown in \cite{AgMa26} that the intersection of intuitionistic logic and orthologic in the signature $\{\wedge,\vee,\lnot\}$ is axiomatized over fundamental logic by the following axiom
\textup{(Ex)}:
\begin{align*}
    \lnot \bigg[a \wedge \Big((b\wedge c) \vee (b &\wedge d)\Big)\bigg] \wedge a \wedge (c \vee e) \wedge \lnot\lnot f \vdash 
\lnot\lnot(a \wedge f) \\
    & \wedge \bigg[(a\wedge c) \vee (a \wedge e) \vee f\bigg] \wedge 
    \bigg[\Big( b \wedge (c \vee d)\Big) \vee \lnot \Big( b \wedge (c \vee d) \Big)\bigg].
\end{align*}

This result motivates the study of constructive quantum logics, i.e., propositional logics extending fundamental logic + \textup{(Ex)}. Here, \textit{orthologic} or \textit{minimal quantum logic} is the logic of ortholattices and results from classical logic by removing the axiom of distributivity. The main reason for considering the signature $\{\wedge, \vee, \lnot\}$ is that orthologic lacks a reasonable notion of implication (e.g., one satisfying \textit{modus ponens}).

However, some quantum logics extending orthologic do admit reasonable forms of implication.
As shown by Dalla Chiara and Giuntini \cite{DCGi02}, the so-called \textit{Sasaki implication} defined in an ortholattice $O$ by \[a \to b := \lnot a \vee (a\wedge b)\] satisfies \textit{modus ponens} if and only if $O$ is orthomodular. The orthomodular law \[\text{ if $a \leq b$, then $a \vee (\lnot a \wedge b) = b$}\] was already considered by Birkhoff and Von Neumann \cite{BvN36} as a principle for quantum logic. As implication is arguably a crucial component of intuitionistic logic, this motivates the question: \textit{What are the jointly valid principles of intuitionistic logic and orthomodular logic with the Sasaki implication?} Our main goal here is to answer this question. Note that, when speaking of implication in the context of orthomodular logic below, we shall always mean the Sasaki implication.

In \cite{Ho23p}, Holliday considers binary operations defined on bounded lattices that he calls preconditionals. Preconditionals are meant to provide a minimal theory of conditionals. Holliday shows that, whenever $L$ is a bounded lattice equipped with a preconditional $\to$ that satifies \textit{modus ponens}, $L$ is an orthomodular lattice if and only if $\to$ satisfies double negation elimination (i.e., the inequality $(a \to 0) \to 0 \leq a$), while $L$ is a Heyting algebra if and only if $\to$ satisfies the \textit{verum ex quodlibet} principle (i.e., the inequality $a \leq b \to a$). However, it is easy to see that the logic of preconditionals that satisfy \textit{modus ponens} is weaker than the intersection of orthomodular logic and intuitionistic logic. In order to axiomatize the latter, we  work in the signature $\{\wedge, \vee, \to\}$, recasting the negation axioms of fundamental logic in terms of implication via the definition $\lnot a := a \to 0$, and we generalize axiom $\mathrm{(Ex)}$ to a new axiom $\mathrm{(iEx)}$ (for ``\textit{implicative $\mathrm{(Ex)}$}'').
Our main theorem is the following:

\begin{theorem}\label{TheoremIntroEx}
The joint validities of intuitionistic logic and orthomodular logic in the full signature $\{\vee,\wedge,\to\}$ are axiomatized by the following axioms over fundamental logic:
\begin{enumerate}
\item $\mathrm{(iEx)}$;
\item $\lnot (a \to b) = \lnot(\lnot a \vee (a\wedge b))$;
\item $a \wedge (a \to b) = a \wedge b$;
\item $(a \wedge b) \to b = 1$;
\item $a \to (b \land c) \leq a \to b$.
\end{enumerate}
\end{theorem}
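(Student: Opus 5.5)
Soundness is routine and I would dispatch it first. In a Heyting algebra, axiom (2) holds because $\lnot(a\to b)=\lnot\lnot a\wedge\lnot b$. The right-hand side $\lnot(\lnot a\vee(a\wedge b))=\lnot\lnot a\wedge\lnot(a\wedge b)$ agrees with it, using $\lnot\lnot a\wedge\lnot(a\wedge b)\le\lnot b$. Axioms (3)–(5) are standard Heyting facts, and (iEx) is checked directly. In an orthomodular lattice with the Sasaki hook, (2) is trivial. Axiom (3) is the orthomodular identity $a\wedge(\lnot a\vee(a\wedge b))=a\wedge b$, which is exactly where orthomodularity enters. Axiom (4) holds since $\lnot(a\wedge b)\vee(a\wedge b)=1$. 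Axiom (5) follows by monotonicity of the Sasaki hook in its consequent. Validity of (iEx) in orthomodular lattices would be verified by reducing it, via (2) and (3), to the ortholattice validity of (Ex) already known from \cite{AgMa26}.

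For completeness I would follow the algebraic strategy of \cite{AgMa26}. Algebraize the proposed logic as a variety $\mathsf{V}$ of bounded lattices with a binary operation $\to$ satisfying the fundamental-logic axioms (with $\lnot a:=a\to 0$) plus (1)–(5). The goal is to show that every subdirectly irreducible member of $\mathsf{V}$ is either a Heyting algebra or an orthomodular lattice whose $\to$ is the Sasaki hook. Then $\mathsf{V}=\mathsf{HA}\vee\mathsf{OML}_{\to}$, whose equational theory is exactly the intersection of the two logics. Since the two varieties are disjoint on nontrivial algebras and are both congruence distributive, this also yields the product decomposition of the lattice of extensions mentioned in the abstract.

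The key structural step is to produce, inside any $A\in\mathsf{V}$, the analogue of the decomposition element of \cite{AgMa26}. The third conjunct of (Ex)/(iEx), $x\vee\lnot x$, suggests the following. The filter generated by elements of the form $x\vee\lnot x$ (relativised suitably) should induce a congruence $\theta_1$ with $A/\theta_1$ orthomodular. Dually, a congruence $\theta_2$ collapsing the ``non-distributive'' witnesses should make $A/\theta_2$ Heyting, and (iEx) should force $\theta_1\cap\theta_2=\Delta$. I would use the argument from \cite{AgMa26} for the $\{\wedge,\vee,\lnot\}$-reduct wholesale, so that the reduct is already a subdirect product of a pseudocomplemented distributive lattice and an ortholattice. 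The remaining work is to check three things. First, that these congruences are compatible with $\to$; here (5) and monotonicity from fundamental logic control $\to$ in each argument. Second, that in the Heyting-type factor, $\to$ is the genuine relative pseudocomplement: (3) gives modus ponens and (4)+(5) give $b\le a\to b$-type behaviour, after which Holliday's preconditional theorem \cite{Ho23p} (verum ex quodlibet implies Heyting) applies. Third, that in the ortho factor, double negation elimination holds, so by \cite{Ho23p} the lattice is orthomodular, and (2)+(3) pin down $\to$ as the Sasaki hook. For that last point, I would show $a\to b$ and $\lnot a\vee(a\wedge b)$ have the same meet with $a$ (by (3)) and the same negation (by (2)), and then use orthomodularity to conclude they are equal.

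I expect the main obstacle to be the congruence compatibility with $\to$, i.e.\ showing that the relation splitting the $\{\wedge,\vee,\lnot\}$-reduct is preserved by implication. Implication is not determined by the reduct in general, and this is precisely what (iEx), as opposed to (Ex), must be designed to guarantee. I would try first to express the splitting element $e$ of a subdirectly irreducible algebra so that $x\mapsto x\wedge e$ and $x\mapsto x\vee e$ are homomorphisms for $\to$, using (iEx) instantiated with implications in place of the relevant negations.
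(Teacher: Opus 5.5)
Your soundness half is fine, and the overall route (subdirectly irreducible members are Heyting or orthomodular) is equivalent to the paper's subdirect embedding. Your orthomodular factor is close to the paper's, with one correction: what makes $x\sim y\iff\lnot x=\lnot y$ compatible with $\to$ is (2), not monotonicity. Fundamental logic constrains only $x\to 0$, and $\to$ cannot be required to be antitone in its antecedent, since the Sasaki hook is not. Axiom (2) says $a\to b\sim\lnot a\vee(a\wedge b)$. This gives compatibility, and it also makes the induced implication on $L/{\sim}$ the Sasaki hook; no comparison of meets with $a$ is needed, since $\lnot$ is involutive there. Then (3) gives orthomodularity. Also, (iEx) in orthomodular algebras does not come from (Ex); it is immediate, because $q\vee(q\to g)\geq q\vee\lnot q=1$ and $a\wedge\lnot\lnot f=a\wedge f$ lies below the other two conjuncts.

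The genuine gap is the Heyting factor. This is exactly the step you postpone, and it carries all the implicational content of the theorem. The construction recalled from \cite{AgMa26} is the quotient by $\sim$ together with $a\mapsto a_I=\{P\text{ prime}:a\in P\}$; there is no decomposition element. For the kernel of this map, compatibility with $\to$ and the Heyting character of the quotient both reduce to the following claim: $a\to b\in P$ iff every prime $Q\supseteq P$ with $a\in Q$ contains $b$.

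The hard direction needs prime filter extensions that are unavailable in non-distributive lattices. The paper obtains them in two steps:
\begin{enumerate}
\item From (iCl$_2$) and $x\leq y\iff x\to y=1$, it proves a restricted prime filter theorem: a filter containing a prime filter and disjoint from an ideal extends to a prime filter disjoint from that ideal.
\item When $a\to b\notin P$ and $a\vee(a\to b)\notin P$, it uses (iCl$_1$) to show that $P\cup\{a\}$ generates a filter avoiding $b$. The remaining case, $a\in P$, is handled by $a\wedge b\leq a\to b$.
\end{enumerate}
Your proposal never isolates these instances of (iEx) with $g\neq 0$, and its substitutes fail.

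First, (3)--(5) do not give $b\leq a\to b$, even on a distributive lattice. Take the Boolean lattice $\{0,p,q,1\}$ and set $\lnot p=\lnot q=0$, $p\to q=q\to p=0$, $1\to x=x$, and $x\to y=1$ for $x\leq y$. This satisfies fundamental logic and (2)--(5), but not $q\leq p\to q$. What excludes it is (iCl$_1$): its instance $((b\wedge a)\to b)\wedge b\leq a\vee(a\to b)$ yields $b\leq a\vee(a\to b)$, and hence $b\leq a\to b$ under distributivity.

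Second, a splitting element $e$ with $x\mapsto x\wedge e$ and $x\mapsto x\vee e$ homomorphisms is a dead end. The decomposition is only subdirect: for the three-element Heyting chain, the embedding has a three-element image in $\mathbf{2}\times\mathbf{3}$. Moreover, in a subdirectly irreducible algebra any such $e$ is $0$ or $1$, so it cannot tell you which case you are in.

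Incidentally, the Heyting and orthomodular varieties are not disjoint on nontrivial algebras: Boolean algebras lie in both.
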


One might argue that the last three of these axioms merely capture the intended meaning of an implication satisfying \textit{modus ponens}, and thus that the ``content'' of the logic is captured by the first two axioms. We shall also give alternate axiomatizations by decomposing $\mathrm{(iEx)}$ into three axioms $\mathrm{(Nu)}$, $\mathrm{(Vi)}$, and $\mathrm{(iCl)}$, the first two of which only involve the connectives $\{\wedge,\vee,\lnot\}$ and are already valid in Ex-logic.\\

We proceed as follows. Section \ref{iexsec} starts by introducing a large class of algebras, called \textit{fundamental algebras}, that essentially correspond to fundamental lattices in a more general signature and generalize Heyting algebras and orthomodular lattices equipped with the Sasaki implication. We focus on a particular class of fundamental algebras, which we call \textit{Ex-algebras}, and introduce the axiom \textup{(iEx)}. In Section \ref{Sectmainthm}, we prove our main technical result, the iEx-Embedding Theorem, which establishes that any Ex-algebra subdirectly embeds into the product of an orthomodular algebra and a Heyting algebra. This result is used in Section \ref{icqsec} to show that iEx-logic, the logic of Ex-algebras, coincides with the intersection of intuitionistic logic and orthomodular logic and to characterize the lattice of extensions of iEx-logic. Finally, Section \ref{modalsec} briefly discusses adding modal operators to fundamental algebras.
\section{Ex-algebras} \label{iexsec}

In this section, we introduce the algebraic structures that we will be working with. The most general notion is that of a fundamental algebra, which subsumes Heyting algebras and orthomodular lattices.
Below, we shall always write $\lnot a$ as shorthand for $a \to 0$.  

\subsection{Fundamental algebras} \label{SectFL}

\begin{definition}
A structure $(L, \wedge, \vee, 0, 1, \to)$ is said to be a \textit{fundamental algebra} if $(L, \wedge, \vee, 0, 1)$ is a bounded lattice and, letting $\lnot a := a\to 0$,  $\lnot$ is a \textit{weak pseudo-complement} i.e., a unary operation for which the following hold:
\begin{enumerate}
\item \textit{antitonicity}, i.e., $a \leq b$ implies $\lnot b \leq \lnot a$ for all $a,b \in L$;
\item \textit{semi-complementation}, i.e., $a \wedge \lnot a = 0$ for all $a \in L$;
\item \textit{double-negation introduction}, i.e., $a \leq \lnot\lnot a$ for all $a \in L$.
\end{enumerate}
\end{definition} 

The $\{\land,\lor,\neg\}$-reduct of any fundamental algebra is a fundamental lattice in the sense of \cite{AgMa26}, also called a weakly pseudo-complemented lattice by Holliday in \cite{Ho23}. We recall that, in any fundamental lattice $L$, $\neg$ is dually self-adjoint, which means in particular that $\neg a = \neg \neg \neg a$ for any $a \in L$. Since we only impose requirements on the implication $\to$ when the second component is $0$, fundamental algebras generalize Holliday's class of preconditionals satisfying \textit{modus ponens} \cite{Ho23p}.

Given a weakly pseudo-complemented, bounded lattice $L$, a \textit{valuation} $v$ into $L$ assigns elements of $L$ to propositional variables. Valuations can be extended to arbitrary propositional formulas in the natural way. 
Often we abuse notation by omitting mention of $v$ if clear from the context, and identifying formulas with elements of $L$.
We say that $\varphi\vdash \psi$ is \textit{valid} in $L$, if $\varphi \leq \psi$ holds for all valuations into $L$.

\textit{Fundamental logic} can be defined as the logic of the $\{\land,\lor,\neg\}$-reducts of fundamental algebras. Alternatively, it can be defined as the weakest \textit{introduction--elimination} logic, i.e., the weakest logic such that for all formulas $\phi,\psi, \chi$, the following are valid:
\vspace{1em}

\begin{minipage}{0.48\textwidth}
\begin{enumerate}
\item $\phi\vdash\phi$;
\item $\bot \vdash \phi$;
\item $\phi \vdash \top$;
\item $\phi\wedge\psi \vdash \phi$;
\item $\phi\wedge\psi\vdash \psi$;
\item $\phi\vdash \phi\vee\psi$;
\item $\psi\vdash \phi\vee\psi$;
\end{enumerate}
\end{minipage}
\hfill
\begin{minipage}{0.48\textwidth}
\begin{enumerate}[start=8]
\item $\phi\vdash\lnot\lnot\phi$;
\item $\phi\wedge\lnot\phi\vdash\bot$;
\item if $\phi\vdash\psi$ and $\psi\vdash\chi$, then $\phi\vdash\chi$;
\item if $\phi\vdash \psi$ and $\phi\vdash\chi$, then $\phi\vdash\psi\wedge\chi$; 
\item if $\phi\vdash\chi$ and $\psi\vdash \chi$, then $\phi\vee\psi\vdash \chi$;
\item if $\phi\vdash\psi$, then $\lnot\psi\vdash\lnot\phi$.
\end{enumerate}
\end{minipage}
\vspace{1em}

\begin{definition}
An \textit{orthomodular algebra} is a fundamental algebra \mbox{$(L, \wedge, \vee, 0, 1, \lnot, \to)$} in which $\lnot$ satisfies double-negation elimination $\lnot\lnot a = a$ for all $a \in L$ and $\to$ satisfies \textit{modus ponens} $a \wedge (a\to b) 
\leq b$ and the Sasaki axiom $a \to b = \lnot a \vee (a \wedge b)$.
\end{definition}

It is well known that orthomodular logic with the Sasaki implication is the logic of orthomodular algebras; see Dalla Chiara and Giuntini \cite{DCGi02}. 
For intuitionistic logic, we must impose a strong interaction condition between $\land$ and $\to$ :

\begin{definition}
A fundamental algebra $(L, \wedge, \vee, 0, 1, \to)$ is a \textit{Heyting algebra} if it satisfies the following Residuation Law: for any $a,b,c\in L$, we have 
\[(c\wedge a)\leq b \text{ if and only if } c\leq a\to b.\]
\end{definition}
Recall that a lattice is \textit{distributive} if meets distribute over joins, and joins distribute over meets. A standard argument shows that distributivity follows from the Residuation Law, so that a Heyting algebra is always a distributive lattice. As is also well known, intuitionistic logic is the logic of Heyting algebras. We also recall the following result, originally due to Glivenko \cite{GL29}:

\begin{theorem}[Glivenko's Theorem]
    For any propositional formulas $\phi$ and $\psi$, if $\phi \leq \psi$ is classically valid, then $\phi \leq \neg \neg \psi$ is intuitionistically valid.
\end{theorem}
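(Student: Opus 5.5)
The statement to prove is Glivenko's Theorem: if $\phi\le\psi$ is classically valid, then $\phi\le\neg\neg\psi$ is intuitionistically valid. Since intuitionistic logic is the logic of Heyting algebras, I will argue algebraically. Fix a Heyting algebra $H$ and a valuation $v$ into $H$, and pass to the regular elements
\[
R(H)=\{a\in H : \neg\neg a = a\}.
\]
The plan has three steps. First, show that $R(H)$ with the operations described below is a Boolean algebra. Second, show that $\neg\neg : H\to R(H)$ is a homomorphism for all the connectives. Third, transfer the classical inequality through this homomorphism.

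\emph{Step 1: $R(H)$ is a Boolean algebra.} I equip $R(H)$ with the operations
\[
a\wedge b,\qquad a\vee_R b:=\neg\neg(a\vee b),\qquad a\to b,\qquad \neg a,\qquad 0,\qquad 1.
\]
The facts needed are standard Heyting computations, all obtained from the Residuation Law:
\begin{itemize}
\item $\neg\neg(a\wedge b)=\neg\neg a\wedge\neg\neg b$;
\item $\neg\neg(a\to b)=\neg\neg a\to\neg\neg b$, and $a\to b\in R(H)$ whenever $b\in R(H)$;
\item $\neg\neg$ is a nucleus (inflationary, monotone, idempotent, and preserving meets).
\end{itemize}
Because $\neg\neg$ is a nucleus, $R(H)$ is a Heyting algebra with join $\vee_R$. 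It is Boolean because, for $a\in R(H)$,
\[
\neg\neg(a\vee\neg a)=1,
\]
since $\neg(a\vee\neg a)=\neg a\wedge\neg\neg a=0$.

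\emph{Step 2: $\neg\neg$ is a homomorphism.} I show that $h:=\neg\neg : H\to R(H)$ preserves each connective.
\begin{itemize}
\item Meets and implication are handled by the identities listed in Step 1.
\item For joins, I need $\neg\neg(a\vee b)=\neg\neg(\neg\neg a\vee\neg\neg b)$. This follows from $\neg(a\vee b)=\neg a\wedge\neg b=\neg\neg\neg a\wedge\neg\neg\neg b$.
\item The constants are preserved, since $h(0)=0$ and $h(1)=1$.
\item Negation is the case $b=0$ of the implication identity.
\end{itemize}
Consequently, for every formula $\chi$,
\[
h(v(\chi))=v'(\chi),
\]
where $v'$ is the valuation $p\mapsto\neg\neg v(p)$ into the Boolean algebra $R(H)$. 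This is proved by induction on $\chi$.

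\emph{Step 3: conclusion.} Suppose $\phi\le\psi$ is classically valid. Then it holds in every Boolean algebra, because classical logic is complete for Boolean algebras (equivalently, by Stone representation, for the two-element algebra). Hence $v'(\phi)\le v'(\psi)$ in $R(H)$, that is, $\neg\neg v(\phi)\le\neg\neg v(\psi)$. Combining this with double-negation introduction gives
\[
v(\phi)\le\neg\neg v(\phi)\le\neg\neg v(\psi).
\]
Since $H$ and $v$ were arbitrary, $\phi\le\neg\neg\psi$ is intuitionistically valid.

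I expect the main obstacle to be the implication case of Step 2, namely the identity $\neg\neg(a\to b)=\neg\neg a\to\neg\neg b$. The inequality $\le$ is routine. For $\ge$, I would first reduce the claim to $(\neg\neg a\to\neg\neg b)\wedge\neg(a\to b)=0$. To prove this, I would use $\neg(a\to b)\le\neg\neg a\wedge\neg b$, which holds because $b\le a\to b$ and $\neg a\le a\to b$. Then $\neg\neg a$, together with $\neg\neg a\to\neg\neg b$, yields $\neg\neg b$, and $\neg\neg b\wedge\neg b=0$.
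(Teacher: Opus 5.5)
Your proof is correct. The paper gives no proof of this statement to compare against: it only recalls Glivenko's Theorem as a classical result, with a citation.

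What you give is the standard algebraic argument. The regular elements $R(H)$ form a Boolean algebra, $\neg\neg\colon H\to R(H)$ preserves $\wedge,\vee,\to,0,1$, and so classical validity in $R(H)$ yields $\neg\neg v(\phi)\le\neg\neg v(\psi)$. Double-negation introduction then gives $v(\phi)\le\neg\neg v(\psi)$. The identities you rely on all check out, including the delicate one $\neg\neg(a\to b)=\neg\neg a\to\neg\neg b$, which you obtain via $\neg(a\to b)\le\neg\neg a\wedge\neg b$.

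A syntactic proof would instead go by induction on classical derivations. It would show that the double negation of each classical axiom is intuitionistically derivable, and that modus ponens is preserved under $\neg\neg$. Your route has some advantages over that one:
\begin{itemize}
\item It fits the paper's purely algebraic semantics.
\item It handles $\to$ directly. The paper needs this, since it applies Glivenko to formulas containing $\to$ when verifying (nS) in Heyting algebras. There the inherited implication on the Boolean algebra $R(H)$ is exactly material implication, matching the classical reading.
\item It gives the slightly stronger conclusion $\neg\neg\phi\le\neg\neg\psi$.
\end{itemize}
Your only external input is the completeness of classical logic for arbitrary Boolean algebras. You correctly derive this from Stone representation. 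Alternatively, it follows from the fact that the finitely generated subalgebra of $R(H)$ relevant to a given inequality is finite and hence a power of the two-element algebra.
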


We now introduce a particular class of fundamental algebras that we call \textit{Ex-algebras}. As we shall see, Ex-algebras generalize both orthomodular algebras and Heyting algebras. In fact, we shall prove in the next section that the variety of Ex-algebras is the join variety of the variety of orthomodular algebras and the variety of Heyting algebras. We start by introducing a weaker notion.

\subsection{The axiom $\mathrm{(iEx)}$} \label{SectAxiomiEx}
\begin{definition}
A \textit{weak Ex-algebra} is a fundamental algebra in which the following inequality is valid:
\begin{align*}
\tag{$\mathrm{iEx}$}     \Bigg[\bigg(a &\wedge \Big((b\wedge c) \vee (b \wedge d)\Big)\bigg) \to g\Bigg] \wedge a \wedge (c \vee e) \wedge \lnot\lnot f \leq 
\lnot\lnot(a \wedge f) \\
    & \wedge \bigg[(a\wedge c) \vee (a \wedge e) \vee f\bigg] \wedge 
    \bigg[ \big( b \wedge (c \vee d)\big) \vee \Big(\big (  b \wedge (c \vee d) \big) \to g\Big)\bigg].
\end{align*}
\end{definition}

Axiom $\mathrm{(iEx)}$ is related to the axiom $\mathrm{(Ex)}$ introduced in \cite{AgMa26} and, as we shall see below, implies it. Let us first show that weak Ex-algebras generalize both orthomodular algebras and Heyting algebras.

\begin{lemma} \label{orthoiexlma}
\textup{(iEx)} is valid in orthomodular logic. Thus, every orthomodular algebra is a weak Ex-algebra.
\end{lemma}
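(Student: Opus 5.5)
The plan is to verify \textup{(iEx)} directly in an arbitrary orthomodular algebra $L$. We use only double-negation elimination and the Sasaki axiom. The orthomodular law itself, and even modus ponens, should not be needed. The idea is that each of the three conjuncts on the right-hand side is above the element $a \wedge f$, and the left-hand side is below $a \wedge f$.

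First I bound the left-hand side. Since $\lnot\lnot f = f$ in $L$, the left-hand side is at most $a \wedge \lnot\lnot f = a \wedge f$. Here we simply discard the conjuncts $\big[(a\wedge((b\wedge c)\vee(b\wedge d)))\to g\big]$ and $(c \vee e)$. It therefore suffices to show that $a \wedge f$ lies below each of the three conjuncts on the right.

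Next I treat the three conjuncts in turn.
\begin{enumerate}
\item For the first conjunct, double-negation elimination gives $\lnot\lnot(a\wedge f) = a\wedge f$, so this case is immediate.
\item For the second conjunct, we have $a\wedge f \leq f \leq (a\wedge c)\vee(a\wedge e)\vee f$.
\item For the third conjunct, put $y := b\wedge(c\vee d)$. By the Sasaki axiom, $y\to g = \lnot y \vee (y\wedge g)$. Hence $y \vee (y\to g) \geq y \vee \lnot y$.
\end{enumerate}
In an orthomodular algebra $\lnot$ is an orthocomplementation: it is antitone and involutive, hence a dual lattice automorphism, so the De Morgan laws hold. Together with semi-complementation this gives $y \vee \lnot y = \lnot(\lnot y \wedge \lnot\lnot y) = \lnot(\lnot y \wedge y) = \lnot 0 = 1$. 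So the third conjunct equals $1$.

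Combining these three facts yields the inequality, and hence every orthomodular algebra is a weak Ex-algebra. No step is a real obstacle. The only point needing care is that $y \vee \lnot y = 1$, which is the De Morgan computation just given.
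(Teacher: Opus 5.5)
Your proposal is correct and follows essentially the same route as the paper, which likewise bounds the left-hand side by $a \wedge \lnot\lnot f$, uses double-negation elimination for the first two conjuncts, and shows the third conjunct equals $1$ via $1 = y \vee \lnot y \leq y \vee (\lnot y \vee (y \wedge g)) = y \vee (y \to g)$ using the Sasaki axiom. Your De Morgan derivation of $y \vee \lnot y = 1$ simply makes explicit a step the paper takes as standard for ortholattices.
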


\proof
We prove that the following strengthening of \textup{(iEx)} is valid in all orthomodular algebras:
\begin{align*}
     a \wedge \lnot\lnot f \leq
\lnot\lnot(a \wedge f) \wedge f \wedge 
    \bigg[ \big( b \wedge (c \vee d)\big) \vee \Big(\big (  b \wedge (c \vee d) \big) \to g\Big)\bigg].
\end{align*}
We need to show that $a \wedge \lnot\lnot f$ entails each of the three conjuncts in the consequent of the displayed axiom.
First, we have 
\[a \wedge \lnot\lnot f \leq a \wedge f \leq \lnot\lnot (a \wedge f)\]
by double-negation elimination. Similarly, we have
$\lnot\lnot f\leq f$.
Finally, the third conjunct is valid already, as, letting $p := (b \wedge (c \vee d))$, we have
\[1 = p \vee \lnot p \leq p \vee (\lnot p \vee (p \land g)) = p \vee (p \to g). \]
\endproof

\begin{lemma} \label{heytingiexlma}
\textup{(iEx)} is valid in intuitionistic logic. Thus, every Heyting algebra is a weak Ex-algebra.
\end{lemma}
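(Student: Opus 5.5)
We verify the three conjuncts on the right-hand side of \textup{(iEx)} one at a time. We work in an arbitrary Heyting algebra and use two standard facts. First, every Heyting algebra is distributive. Second, residuation gives modus ponens, $x \wedge (x \to y) \leq y$, and the $\lnot\lnot$-law $\lnot\lnot(x\wedge y) = \lnot\lnot x \wedge \lnot\lnot y$. Throughout, set $p := b \wedge (c \vee d)$. By distributivity, $(b\wedge c)\vee(b\wedge d) = p$, so the first conjunct of the left-hand side is simply $(a \wedge p) \to g$. It suffices to bound a suitable sub-conjunction of the left-hand side by each conjunct on the right.

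\emph{First conjunct.} We have $a \leq \lnot\lnot a$ by double-negation introduction. Hence $a \wedge \lnot\lnot f \leq \lnot\lnot a \wedge \lnot\lnot f = \lnot\lnot(a \wedge f)$. The last equality is the standard Heyting identity that $\lnot\lnot$ preserves binary meets. It can be derived from residuation, or obtained via Glivenko's Theorem: $\lnot\lnot a \wedge \lnot\lnot f \leq a \wedge f$ is classically valid, so $\lnot\lnot a \wedge \lnot\lnot f \leq \lnot\lnot(a\wedge f)$ is intuitionistically valid.

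\emph{Second conjunct.} By distributivity, $a \wedge (c \vee e) = (a \wedge c) \vee (a \wedge e)$. This is below $(a\wedge c)\vee(a\wedge e)\vee f$.

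\emph{Third conjunct.} This is the only step that uses the implication non-trivially. We show $a \wedge ((a\wedge p)\to g) \leq p \to g$. By the Residuation Law, it suffices to check $p \wedge a \wedge ((a\wedge p)\to g) \leq g$, and this is an instance of modus ponens with $x = a \wedge p$. Hence the left-hand side lies below $p \to g \leq p \vee (p \to g)$.

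Combining the three bounds via the meet rule yields \textup{(iEx)}. I expect no real obstacle. The only point requiring care is noticing that the antecedent $a\wedge((b\wedge c)\vee(b\wedge d))$ collapses to $a \wedge p$ under distributivity, so that the residuation step in the third conjunct goes through directly.
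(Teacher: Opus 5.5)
Your proof is correct and follows essentially the same route as the paper's: the first conjunct via $a \leq \lnot\lnot a$ and $\lnot\lnot a \wedge \lnot\lnot f \leq \lnot\lnot(a \wedge f)$, the second via distributivity, and the third via residuation plus modus ponens applied to $a \wedge p$. Your version is slightly cleaner, because you work with $g$ directly; the paper's displayed ``strengthening'' instead puts $e \wedge g$ in the antecedent implication, which makes its left-hand side smaller rather than larger, although its argument works verbatim with $g$.
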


\proof
We prove that the following strengthening of \textup{(iEx)} is valid in all Heyting algebras:
\begin{align*}
   \Bigg[\bigg(a &\wedge \Big((b\wedge c) \vee (b \wedge d)\Big)\bigg) \to (e \wedge g)\Bigg] \wedge a \wedge (c \vee e) \wedge \lnot\lnot f\\ &\leq 
\lnot\lnot(a \wedge f)
     \wedge \bigg[(a\wedge c) \vee (a \wedge e) \bigg] \wedge 
    \bigg[  \big (  b \wedge (c \vee d) \big) \to g\bigg].
\end{align*}
Intuitionistically, we have
\begin{align}\label{eqVInt1}
    a \wedge \lnot\lnot f \leq \lnot\lnot a\wedge \lnot\lnot f \leq \lnot\lnot (a \wedge f).
\end{align}
By distributivity, we have 
\begin{align}\label{eqVInt2}
a \wedge (c \vee e) \leq (a \wedge c) \vee (a \wedge e).
\end{align}

Lastly, we derive the following:
\begin{align}\label{eqVInt3}
\Bigg[\bigg(a \wedge \Big((b\wedge c) \vee (b \wedge d)\Big)\bigg) \to (e \wedge g)\Bigg] \wedge a \leq  \Big(b \wedge (c \vee d)\Big) \to g.
\end{align}

By residuation, \eqref{eqVInt3} follows from:
\begin{align} \label{eqVInt3'}\Bigg[\bigg(a \wedge \Big((b\wedge c) \vee (b \wedge d)\Big)\bigg) \to (e \wedge g)\Bigg] \wedge a \land  \Big(b \wedge (c \vee d)\Big) \leq g.\end{align}
Let $p= (b \land c) \lor (b \land d)$, by distributivity and \textit{modus ponens}, we have:
\begin{align*}
    \Big[(a \wedge p) \to (e \wedge g)\Big] \wedge a \land  \Big(b \wedge (c \vee d)\Big) &\leq \Big[(a \wedge p) \to (e \wedge g)\Big] \wedge a \wedge p \\ &\leq e \land g \\
    &\leq g,
\end{align*}
as desired.
Putting together \eqref{eqVInt1}  \eqref{eqVInt2}, and \eqref{eqVInt3}, we  derive the strengthening of \textup{(iEx)} stated at the beginning of the proof.
\endproof

We now focus on a particular class of weak Ex-algebras.

\begin{definition}
An \textit{Ex-algebra} is a fundamental algebra in which $\mathrm{(iEx)}$ and the following four axioms are valid:
\begin{align}
\lnot (a \to b) &= \lnot (\lnot a\vee (a\wedge b)) \tag{nS}\\
a \wedge (a\to b) &= a \wedge b \tag{MP}\\
(a\wedge b)\to b &= 1 \tag{M}\\
a \to (b \land c) &\leq a \to b \tag{W4}
\end{align}
\end{definition}

Just like weak Ex-algebras, Ex-algebras generalize both orthomodular algebras and Heyting algebras.

\begin{lemma} \label{orthoexallma}
Axioms $\mathrm{(nS)}$, $\mathrm{(MP)}$, $\mathrm{(M)}$ and $\mathrm{(W4)}$ are valid in all orthomodular algebras. Thus, every orthomodular algebra is an Ex-algebra.
\end{lemma}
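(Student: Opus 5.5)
We verify the four axioms one at a time in an arbitrary orthomodular algebra $L$. Throughout we use the Sasaki axiom $a \to b = \lnot a \vee (a \wedge b)$, double-negation elimination, and the fact that $\lnot$ is an antitone involution with $a \wedge \lnot a = 0$. Hence $\lnot$ is an orthocomplementation, and the De Morgan laws hold. Once the four axioms are checked, the second sentence follows by combining them with Lemma~\ref{orthoiexlma}, which already gives $\mathrm{(iEx)}$.

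\textbf{(nS).} This is immediate. By the Sasaki axiom, $a \to b$ and $\lnot a \vee (a \wedge b)$ are literally the same element, so their negations coincide.

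\textbf{(M).} Here we compute $(a\wedge b) \to b = \lnot(a \wedge b) \vee (a \wedge b \wedge b) = \lnot(a\wedge b) \vee (a \wedge b)$. By De Morgan and double negation this equals $\lnot(\lnot\lnot(a\wedge b) \wedge \lnot(a\wedge b))$. That is $\lnot 0 = 1$, since $\lnot 0 = \lnot(1 \wedge \lnot 1) \geq$ and indeed $\lnot 0 = \lnot\lnot 1 = 1$.

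\textbf{(W4).} Since $a \wedge b \wedge c \leq a \wedge b$, we get $\lnot a \vee (a \wedge b \wedge c) \leq \lnot a \vee (a \wedge b)$. This is exactly $a \to (b\wedge c) \leq a \to b$.

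\textbf{(MP).} This is the only step that uses orthomodularity, and I expect it to be the main point. The direction $a \wedge b \leq a \wedge (\lnot a \vee (a \wedge b))$ is trivial. The direction $\leq$ follows from the modus ponens clause in the definition of an orthomodular algebra: $a \wedge (a \to b) \leq b$, and trivially $\leq a$, so it is $\leq a \wedge b$.

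If one prefers to derive modus ponens from the orthomodular law rather than take it as given, here is the argument. Set $x = a \wedge b \leq a$. Orthomodularity applied to $\lnot a \leq \lnot x$ gives $\lnot x = \lnot a \vee (a \wedge \lnot x)$. Dualizing via $\lnot$ yields $x = a \wedge (\lnot a \vee x)$, which is exactly $a \wedge (a \to b) = a \wedge b$.

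Combining the four verifications with Lemma~\ref{orthoiexlma}, every orthomodular algebra satisfies $\mathrm{(iEx)}$, $\mathrm{(nS)}$, $\mathrm{(MP)}$, $\mathrm{(M)}$ and $\mathrm{(W4)}$, and is therefore an Ex-algebra.
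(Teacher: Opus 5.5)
Your proof is correct and takes essentially the same route as the paper: (nS) and (W4) come directly from the Sasaki form, (M) from $\lnot x \vee x = 1$ in the ortholattice, and (MP) from modus ponens plus the trivial reverse inequality. The second sentence of the lemma then rests on Lemma~\ref{orthoiexlma}, implicitly in the paper and explicitly in yours. The only blemish is the dangling fragment ``$\lnot 0 = \lnot(1 \wedge \lnot 1) \geq$'' in your (M) paragraph; it should be deleted, since $\lnot 0 = \lnot\lnot 1 = 1$ already finishes that step.
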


\proof
In orthomodular lattices $\mathrm{(nS)}$ is equivalent to $a \to b = \lnot a \vee (a\wedge b)$, which holds by definition. 

We verify $\mathrm{(MP)}$. The inequality $a \wedge (a\to b) \leq b$ holds by the orthomodular law. Clearly we also have $a \wedge (a\to b) \leq a$, and thus $a \wedge (a\to b) \leq a \wedge b$. Conversely, we have  $a \wedge b \leq a$ and $a \wedge b \leq \lnot a \vee (a\wedge b) = a\to b$, and thus $a \wedge (a\to b) = a\wedge b$.
For $\mathrm{(M)}$, we have 
\begin{align*}
(a\wedge b) \to b
&= \lnot (a\wedge b) \vee ((a \wedge b) \wedge b)\\
&= \lnot (a\wedge b) \vee (a \wedge b) = 1,
\end{align*}
as desired.
Finally, we clearly have that 
\[\neg a \lor (a \land (b \land c)) \leq \neg a \lor (a \land b),\] from which $\mathrm{(W4)}$ follows.
\endproof

\begin{lemma} \label{heytingexallma}
Axioms $\mathrm{(nS)}$, $\mathrm{(MP)}$, $\mathrm{(M)}$ and $\mathrm{(W4)}$ are valid in all Heyting algebras. Thus, every Heyting algebra is an Ex-algebra.
\end{lemma}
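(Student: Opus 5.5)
We verify each of the four axioms directly in an arbitrary Heyting algebra $H$, using only the Residuation Law, distributivity (which follows from residuation), and the fact that $\lnot a = a \to 0$. The second claim then follows by combining this with Lemma \ref{heytingiexlma}, since a Heyting algebra is in particular a fundamental algebra and validates $\mathrm{(iEx)}$.

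For $\mathrm{(MP)}$: residuation applied to $a\to b \leq a\to b$ gives $(a\to b)\wedge a \leq b$, hence $a\wedge(a\to b)\leq a\wedge b$. Conversely, $b\wedge a\leq b$ gives $b\leq a\to b$ by residuation, so $a\wedge b\leq a\wedge(a\to b)$.

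For $\mathrm{(M)}$: $1\wedge(a\wedge b)\leq b$, so $1\leq (a\wedge b)\to b$.

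For $\mathrm{(W4)}$: from $(a\to(b\wedge c))\wedge a\leq b\wedge c\leq b$ (by $\mathrm{(MP)}$), residuation yields $a\to(b\wedge c)\leq a\to b$.

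The only step requiring thought is $\mathrm{(nS)}$, i.e.\ $\lnot(a\to b)=\lnot(\lnot a\vee(a\wedge b))$, since in a Heyting algebra $a\to b$ and $\lnot a\vee(a\wedge b)$ need not coincide. We prove that they have the same double negation, which implies the claim because $\lnot=\lnot\lnot\lnot$ in any fundamental lattice. The inequality $\lnot a\vee(a\wedge b)\leq a\to b$ is immediate: $\lnot a\wedge a\leq 0\leq b$ and $a\wedge b\wedge a\leq b$, then residuate and take the join. Antitonicity then gives $\lnot(a\to b)\leq\lnot(\lnot a\vee(a\wedge b))$.

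For the reverse inequality, we want $a\to b\leq\lnot\lnot(\lnot a\vee(a\wedge b))$. The formula $(a\to b)\to(\lnot a\vee(a\wedge b))$ is a classical tautology, so we would like to invoke Glivenko's Theorem. However, Glivenko only gives $\top\leq\lnot\lnot(\text{that implication})$, not the inequality in the required form. Applying Glivenko to the inequality $a\to b\leq \lnot a\vee(a\wedge b)$ itself is the cleaner route: it yields exactly $a\to b\leq\lnot\lnot(\lnot a\vee(a\wedge b))$.

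If one prefers to avoid Glivenko, a direct argument works. Set $x=\lnot a\vee(a\wedge b)$; it suffices to show $(a\to b)\wedge\lnot x\leq 0$.
\begin{itemize}
\item Since $\lnot x\leq\lnot\lnot a$ and $\lnot x\leq\lnot(a\wedge b)$, we get $(a\to b)\wedge\lnot x\wedge a\leq b\wedge a\wedge\lnot(a\wedge b)=0$.
\item By residuation this gives $(a\to b)\wedge\lnot x\leq\lnot a$, so the left side is also below $\lnot a\wedge\lnot\lnot a=0$.
\end{itemize}
Applying antitonicity to $a\to b\leq\lnot\lnot x$ then gives $\lnot(a\to b)\geq\lnot\lnot\lnot x=\lnot x$, which completes $\mathrm{(nS)}$.
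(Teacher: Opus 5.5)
Your proof is correct and follows the paper's route: you verify $\mathrm{(MP)}$, $\mathrm{(M)}$ and $\mathrm{(W4)}$ by residuation exactly as the paper does, and you also use Glivenko's Theorem for $\mathrm{(nS)}$. The differences are minor. You obtain $\lnot(a\to b)\le\lnot(\lnot a\vee(a\wedge b))$ directly from $\lnot a\vee(a\wedge b)\le a\to b$ and antitonicity, and you apply Glivenko to the unnegated inequality $a\to b\le\lnot a\vee(a\wedge b)$, whereas the paper applies it to both negated inequalities. Your Glivenko-free computation showing $(a\to b)\wedge\lnot x\le 0$ is also valid and makes the argument fully self-contained.
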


\proof
We first prove $\mathrm{(MP)}$. Notice that $a \wedge (a\to b) \leq a$ and $a\wedge (a\to b)\leq b$. Conversely, $a \wedge b \leq a$ and $a\wedge b \leq a\to b$, where the latter inequality follows from the fact that $a \wedge (a\wedge b) \leq b$ and residuation.

For $\mathrm{(nS)}$, we first observe that the equalities 
\begin{align*}\neg a \lor (a \land b) &= (\neg a \lor a) \land (\neg a \lor b) \\ &= \neg a \lor b \\
&= a \to b
\end{align*} are classically valid, which means that $\neg (a\to b) = \neg (\neg a \lor (a \land b))$ is also classically valid. By Glivenko's theorem, this means that the inequalities \[\neg (a \to b) \leq \neg\neg\neg (\neg a \lor (a \land b)) = \neg(\neg a \lor (a \land b))\] and
\[\neg (\neg a \lor (a \land b)) \leq \neg\neg\neg(a\to b) = \neg (a\to b)\] are intuitionistically valid. This shows that $\mathrm{(nS)}$ is valid.
For $\mathrm{(M)}$, we have that $1 \land a \land b \leq b$, which by residuation implies $1 \leq (a \land b) \to b$, as desired.
Finally, for $\mathrm{(W4)}$, by residuation, it is enough to show that $a \land (a\to (b \land c)) \leq b$. But \textit{modus ponens} gives us $a \land (a \to (b \land c)) \leq b \land c \leq b$, as desired.
\endproof

\subsection{Properties of Ex-algebras and weak Ex-algebras}\label{SectProperties}

Let us now derive a number of properties of (weak) Ex-algebras, which will play an important role in the proof of our main theorem in the next section. We start by clarifying the relationship between the axioms $\mathrm{(Ex)}$ and $\mathrm{(iEx)}$.

\begin{lemma}
The $\{\wedge,\vee,\lnot\}$-reduct of every Ex-algebra is an Ex-lattice.
\end{lemma}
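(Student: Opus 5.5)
Since the reduct of a fundamental algebra is already a fundamental lattice, the only thing to establish is that $\mathrm{(Ex)}$ holds in every Ex-algebra. My approach is to derive $\mathrm{(Ex)}$ from $\mathrm{(iEx)}$ by instantiating $g := 0$. With this choice, every implication $x \to g$ in $\mathrm{(iEx)}$ becomes $x \to 0 = \lnot x$. So the left-hand side of $\mathrm{(iEx)}$ becomes
\[\lnot\Big[a \wedge \big((b\wedge c)\vee(b\wedge d)\big)\Big] \wedge a \wedge (c\vee e)\wedge \lnot\lnot f,\]
which is exactly the left-hand side of $\mathrm{(Ex)}$. The third conjunct on the right-hand side becomes $\big(b\wedge(c\vee d)\big) \vee \lnot\big(b \wedge (c\vee d)\big)$, exactly as in $\mathrm{(Ex)}$. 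The first two conjuncts, $\lnot\lnot(a\wedge f)$ and $(a\wedge c)\vee(a\wedge e)\vee f$, appear verbatim in both axioms.

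Thus $\mathrm{(Ex)}$ is literally the substitution instance $g := 0$ of $\mathrm{(iEx)}$, read through the abbreviation $\lnot x := x \to 0$. In particular, this shows that every weak Ex-algebra (and hence every Ex-algebra) has an Ex-lattice as its $\{\wedge,\vee,\lnot\}$-reduct, and the additional axioms $\mathrm{(nS)}$, $\mathrm{(MP)}$, $\mathrm{(M)}$, $\mathrm{(W4)}$ are not needed.

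The main point to check is that "Ex-lattice" in \cite{AgMa26} means exactly a fundamental lattice validating $\mathrm{(Ex)}$ as displayed in the introduction, with no further conditions. I should also confirm that the negation of the reduct is the operation $x \mapsto x\to 0$, which holds by the convention fixed at the start of Section~\ref{iexsec}. Given these, no real obstacle arises: validity of $\mathrm{(iEx)}$ under all valuations includes those sending $g$ to $0$, and this yields $\mathrm{(Ex)}$ for all values of $a,b,c,d,e,f$.
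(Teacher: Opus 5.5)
Your proposal is correct and is exactly the paper's argument: setting $g = 0$ in $\mathrm{(iEx)}$ turns each $x \to g$ into $\lnot x$ and yields $\mathrm{(Ex)}$ verbatim. Your remark that only $\mathrm{(iEx)}$ is used, so the claim already holds for weak Ex-algebras, is also correct.
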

\proof
By setting $g = 0$ in $\mathrm{(iEx)}$, we recover $\mathrm{(Ex)}$, from which the claim follows.
\endproof

Next, we single out three additional axioms that follow directly from $\mathrm{(iEx)}$.

\begin{lemma}
    The following axioms are valid in all weak Ex-algebras:

    \begin{itemize}
        \item[{\textup{(iCl)}}]
$\Big[\big( a \land ((b \land c) \lor (b \land d))\big) \to g\Big] \land a \leq (b \land (c\lor d)) \lor \Big[(b\land (c \lor d)) \to g\Big]$;

\item[\textup{(iCl$_1$)}] $\big(( a \land b) \to g \big)\land a \leq b \lor (b \to g)$;

\item[\textup{(iCl$_2$)}] $\big( (b \land c) \lor (b \land d)\big) \to g  \leq (b \land (c\lor d)) \lor \big[(b\land (c \lor d)) \to g\big]$;

    \end{itemize}
\end{lemma}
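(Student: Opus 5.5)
Each of the three axioms will come from $\mathrm{(iEx)}$ by substituting particular values for some variables and then keeping only the last conjunct on the right-hand side. The only facts needed are lattice monotonicity and the fundamental-algebra properties $\lnot\lnot 1 = 1$ and $\lnot 0 = 1$. Both follow from double-negation introduction: $1 \leq \lnot\lnot 1$, and $1 = \lnot\lnot 1 \leq \lnot 0$, since $\lnot\lnot 1 \leq \lnot 0$ by antitonicity applied to $0 \leq \lnot 1$.

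\textbf{(iCl).} Put $e := 1$ and $f := 1$ in $\mathrm{(iEx)}$. Then $c \vee e = 1$ and $\lnot\lnot f = 1$, so the left-hand side becomes $\big[(a \wedge ((b\wedge c)\vee(b\wedge d))) \to g\big] \wedge a$. Since the left-hand side lies below each conjunct of the right-hand side, it lies below the third one, $(b \wedge (c\vee d)) \vee \big[(b\wedge(c\vee d)) \to g\big]$. That is exactly (iCl). The two remaining axioms are then specializations of (iCl).

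\textbf{(iCl$_1$).} Put $c := 1$ and $d := 0$ in (iCl). Then $(b\wedge c)\vee(b\wedge d) = b \vee 0 = b$ and $b \wedge (c \vee d) = b$, so (iCl) becomes $\big((a\wedge b)\to g\big)\wedge a \leq b \vee (b \to g)$. This step is a syntactic identity, so no monotonicity of $\to$ is needed.

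\textbf{(iCl$_2$).} Put $a := 1$ in (iCl). Then $a \wedge ((b\wedge c)\vee(b\wedge d)) = (b\wedge c)\vee(b\wedge d)$, and the conjunct $\wedge\, a$ vanishes. This yields (iCl$_2$) verbatim.

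I do not expect a real obstacle. The only point needing care is that after each substitution the antecedents of $\to$ coincide literally, using only the lattice identities $x \wedge 1 = x$ and $x \vee 0 = x$. This matters because a weak Ex-algebra need not have $\to$ monotone in its first argument.
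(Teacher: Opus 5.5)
Your proof is correct and follows the paper's argument: substitute $e=f=1$ in $\mathrm{(iEx)}$ and keep the last conjunct to get (iCl), then specialize (iCl) with $a:=1$ to get (iCl$_2$). The one cosmetic difference is in (iCl$_1$), where the paper takes $c=d=1$ and you take $c=1$, $d=0$; both collapse the antecedent to the same element $a\wedge b$, so neither needs monotonicity of $\to$.
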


\proof
We prove each item in order. Substituting $ e = f = 1$ in \textup{(iEx)}, we have
\[\Big[\big(a \land ((b \land c) \lor (b \land d))\big) \to g\Big] \land a \leq \neg \neg a \land \Big[\big((b \land (c\lor d)) \lor ((b\land (c \lor d)) \to g)\big)\Big],\]
which clearly implies \textup{(iCl)}.

Moreover, \textup{(iCl$_1$)} follows from substituting $c = d = 1$ in \textup{(iCl)}, and \textup{(iCl$_2$)} follows from substituting $a=1$ in \textup{(iCl)}.
\endproof

Finally, we prove two useful properties of Ex-algebras.

\begin{lemma}\label{LemmaInequalityEx}
Suppose $L$ is an Ex-algebra. Then, for all $a, b \in L$, we have
\[a \leq b \text{ if and only if } a \to b = 1.\]
\end{lemma}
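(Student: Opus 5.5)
Both directions follow directly from the Ex-algebra axioms (M) and (MP), so no appeal to (iEx) is needed.

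For the left-to-right direction, suppose $a \leq b$. Then $a \wedge b = a$, so $a \to b = (a \wedge b) \to b$. By axiom (M), $(a\wedge b)\to b = 1$, hence $a \to b = 1$. The only point to note is that we rewrite the antecedent $a$ as $a \wedge b$ literally, as the same lattice element. No monotonicity of $\to$ in its first argument is used, which matters because such monotonicity is not available in general (it fails for the Sasaki hook).

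For the right-to-left direction, suppose $a \to b = 1$. By axiom (MP), $a \wedge b = a \wedge (a \to b) = a \wedge 1 = a$, so $a \leq b$.

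There is no real obstacle here. The only subtlety is the one already flagged: the forward direction must go through the identity $a = a\wedge b$ and axiom (M), not through any antitonicity or monotonicity property of $\to$ in its first argument.
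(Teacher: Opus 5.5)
Your proof is correct and essentially identical to the paper's: the forward direction rewrites $a$ as $a \wedge b$ and applies (M), and the converse uses (MP) to get $a \wedge b = a \wedge 1 = a$. Your remark that no monotonicity of $\to$ in its first argument is needed is accurate, though the paper does not dwell on it.
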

\proof
Suppose that $a\leq b$. Then by $\mathrm{(M)}$ we have 
$1 = (a\wedge b) \to b = a \to b$.
Conversely, suppose that $a\to b = 1$. By $\mathrm{(MP)}$, we have 
$a\wedge b = a \wedge (a\to b) = a$,
which implies $a \leq b$.
\endproof

\begin{lemma}\label{LemmaWes2}
Suppose that $L$ is an Ex-algebra. Then, for all $a,b \in L$, we have
\[a\wedge b \leq a\to b.\]
\end{lemma}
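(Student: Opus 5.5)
The inequality should follow at once from the axiom $\mathrm{(MP)}$, with no need for $\mathrm{(iEx)}$ or any of its consequences (iCl), (iCl$_1$), (iCl$_2$). The plan is to rewrite the left-hand side $a \wedge b$ using $\mathrm{(MP)}$, so that it becomes visibly a meet one of whose conjuncts is $a \to b$. The conclusion then follows from the lattice inequality $x \wedge y \leq y$.

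First, instantiate $\mathrm{(MP)}$ at the given $a, b \in L$, which yields the equation $a \wedge (a \to b) = a \wedge b$. Reading this right to left, and then applying the fact that a meet lies below each of its arguments, we get
\[
a \wedge b \;=\; a \wedge (a \to b) \;\leq\; a \to b .
\]
This is precisely the claimed inequality, and it holds for all $a, b \in L$.

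There is no real obstacle. The only point worth noting is that we use the ``$\geq$'' direction of $\mathrm{(MP)}$, namely $a \wedge b \leq a \wedge (a \to b)$, rather than the modus ponens direction. This is the half that distinguishes the equational axiom $\mathrm{(MP)}$ from the bare modus ponens inequality of the orthomodular definition. Without it, the argument would instead have to go through $\mathrm{(M)}$ together with some monotonicity of $\to$ in its first argument, which fundamental algebras do not supply in general.
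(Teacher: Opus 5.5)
Your proof is correct, and it is more direct than the paper's.

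The paper's route goes through Lemma~\ref{LemmaInequalityEx}. It starts from $1 \leq (a\wedge b)\to(a\wedge b)$ and rewrites the consequent as $(a\to b)\wedge a$ using $\mathrm{(MP)}$. It then weakens this to $(a\wedge b)\to(a\to b)$ by $\mathrm{(W4)}$. Finally it reads $(a\wedge b)\to(a\to b)=1$ back as $a\wedge b\leq a\to b$ via Lemma~\ref{LemmaInequalityEx}, so it also invokes $\mathrm{(M)}$.

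You observe instead that the statement is, over the lattice laws, just a restatement of the ``$\geq$'' half of $\mathrm{(MP)}$. The paper's detour gains nothing, because its substitution step already needs exactly the equation $a\wedge b=a\wedge(a\to b)$ that you use. Your version shows that the lemma depends on $\mathrm{(MP)}$ alone.

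This is worth noting. In the proof of Theorem~\ref{altaxthm}, the paper names this lemma as the only place $\mathrm{(W4)}$ is used. With your argument the embedding theorem appears to go through without $\mathrm{(W4)}$. Since $\mathrm{(W4)}$ holds in both $O_L$ and $I_L$, it would then be derivable from the remaining axioms.

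One minor correction to your closing remark. The alternative route (the paper's) uses $\mathrm{(W4)}$, which is equivalent to monotonicity of $\to$ in its second argument, not its first. It also still relies on the same half of $\mathrm{(MP)}$.
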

\proof
By Lemma \ref{LemmaInequalityEx}, we have that $a\land b \leq a\land b$ implies $1 \leq (a \land b) \to (a\land b)$. Now we reason as follows:
\begin{align*}
1 &\leq (a\wedge b) \to (a\wedge b)\\
&\leq (a\wedge b) \to \big( (a \to b) \wedge a\big), &\text{by $\mathrm{(MP)}$}\\
&\leq (a\wedge b) \to (a\to b), &\text{ by $\mathrm{(W4)}$},
\end{align*}
from which the inequality follows by Lemma \ref{LemmaInequalityEx} again.
\endproof

We conclude this section with two examples. The $7$-element lattice depicted on the left below is an Ex-lattice that cannot be turned into an Ex-algebra. Red arrows represent the negation operation, with the convention that $\neg 0 = 1$ and $\neg 1 = 0$. The $5$-element lattice on the right, with the implication given by the adjacent table, is an Ex-algebra that is neither a Heyting algebra nor an orthomodular algebra. In particular, it has a ``trivial'' negation, where $\neg 0 =1$ and $\neg x = 0$ for any $x \neq 0$. 

\begin{minipage}{0.4\textwidth}
    \[\begin{tikzcd}
	& 1 &&&&&& \\
	& e &&&&& 1 \\
	a && b &&& a && b \\
	c && d &&&& c \\
	& 0 &&&&& 0
	\arrow[no head, from=2-2, to=1-2]
	\arrow[color={rgb,255:red,214;green,92;blue,92}, from=2-2, to=5-2]
	\arrow[no head, from=3-1, to=2-2]
	\arrow[color={rgb,255:red,214;green,92;blue,92}, tail reversed, from=3-1, to=4-3]
	\arrow[no head, from=3-3, to=2-2]
	\arrow[no head, from=3-6, to=2-7]
	\arrow[no head, from=3-8, to=2-7]
	\arrow[no head, from=4-1, to=3-1]
	\arrow[color={rgb,255:red,214;green,92;blue,92}, tail reversed, from=4-1, to=3-3]
	\arrow[no head, from=4-3, to=3-3]
	\arrow[no head, from=4-7, to=3-6]
	\arrow[no head, from=4-7, to=3-8]
	\arrow[no head, from=5-2, to=4-1]
	\arrow[no head, from=5-2, to=4-3]
	\arrow[no head, from=5-7, to=4-7]
\end{tikzcd}\]
\end{minipage}
\hfill
\begin{minipage}{0.35\textwidth}
\vspace*{4em}

\[
\begin{array}{c|ccccc}
\to & 0 & 1 & a & b & c \\
\hline
0 & 1 & 1 & 1 & 1 & 1 \\
1 & 0 & 1 & a & b & c \\
a & 0 & 1 & 1 & b & b \\
b & 0 & 1 & a & 1 & a \\
c & 0 & 1 & 1 & 1 & 1
\end{array}
\]
\end{minipage}

\section{The iEx-Embedding Theorem}\label{Sectmainthm}
Throughout this section, a map between fundamental algebras that preserves all the operations will be called a \textit{fundamental algebra homomorphism}, and a map between the $\{\land,\lor,\neg\}$-reducts of two fundamental algebras that preserve all three operations will be called a \textit{fundamental homomorphism}. We are now ready to prove our main theorem. Recall that a homomorphism $f: L \to M_1 \times M_2$ is a subdirect embedding if $f$ is injective and the composition of $f$ with the projections $\pi_1: M_1 \times M_2 \to M_1$ and $\pi_2: M_1 \times M_2 \to M_2$ yields surjective homomorphisms.

\begin{theorem}[The iEx-Embedding Theorem] \label{mainthm}
Let $(L, \land, \lor, 0,1, \to)$ be an Ex-algebra. Then there are an orthomodular algebra $O_L$, a Heyting algebra $I_L$, and a subdirect fundamental algebra embedding $e: L \to O_L \times I_L$.
\end{theorem}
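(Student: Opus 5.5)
The plan is to realize both factors as quotients of $L$: $O_L := L/\psi$ and $I_L := L/\theta$ for suitable congruences $\psi$ and $\theta$ of the full signature $\{\wedge,\vee,\to,0,1\}$. Then $e(a) := ([a]_\psi,[a]_\theta)$ is automatically a fundamental algebra homomorphism, and both composites with the projections are automatically surjective. So the whole theorem reduces to three claims: (i) $L/\psi$ is an orthomodular algebra; (ii) $L/\theta$ is a Heyting algebra; (iii) $\psi\cap\theta$ is the identity relation, which gives injectivity.

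\emph{The orthomodular factor.} I will take $a\mathrel{\psi} b$ iff $\lnot a=\lnot b$, so that $O_L$ is identified with the set of regular elements $\{\lnot x : x\in L\}$ via $a\mapsto\lnot\lnot a$. Since $\lnot=\lnot\lnot\lnot$ in any fundamental lattice, each class has the canonical representative $\lnot\lnot a$, and negation plainly respects $\psi$. The steps are as follows.
\begin{enumerate}
\item Show $\psi$ is compatible with $\vee$ and $\wedge$. Here I will reuse the corresponding step of the Ex-Embedding Theorem of \cite{AgMa26}, which applies because the $\{\wedge,\vee,\lnot\}$-reduct of $L$ is an Ex-lattice by the lemma above. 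That step gives an ortholattice quotient.
\item Show $\psi$ is compatible with $\to$. This is exactly what $\mathrm{(nS)}$ is for: it says $\lnot(a\to b)$ depends only on $\lnot a$ and $\lnot(a\wedge b)$, both of which are $\psi$-invariant.
\item Deduce that in the quotient $\to$ coincides with the Sasaki hook, again by $\mathrm{(nS)}$ together with double negation elimination.
\item Obtain modus ponens in the quotient from $\mathrm{(MP)}$. By the Dalla Chiara--Giuntini characterization, this yields orthomodularity.
\end{enumerate}

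\emph{The Heyting factor.} I will take $\theta$ to be the least congruence whose quotient is a Heyting algebra. I will then try to describe $\theta$ concretely, via a filter-type description in the style of \cite{AgMa26}: $a\mathrel{\theta} b$ iff $a\wedge d=b\wedge d$ for some $d$ in the filter generated by the ``excluded-middle'' elements $p\vee(p\to g)$. The motivation is that (iCl), (iCl$_1$), (iCl$_2$) say that exactly these elements control the failure of distributivity and residuation. To check that the quotient satisfies residuation, one direction should follow from Lemma \ref{LemmaWes2} together with $\mathrm{(W4)}$, and the other from $\mathrm{(MP)}$. Distributivity of the quotient then follows from residuation.

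\emph{Injectivity (the main obstacle).} I expect the hard step to be showing that $\lnot a=\lnot b$ and $a\mathrel{\theta} b$ together force $a=b$. It suffices to prove that $a\wedge d\le b$ whenever $\lnot\lnot a\le\lnot\lnot b$ and $d$ is one of the generators of the filter. This is precisely the shape of $\mathrm{(iEx)}$:
\begin{itemize}
\item the conjunct $\lnot\lnot(a\wedge f)$ handles the regular part;
\item the conjunct $(a\wedge c)\vee(a\wedge e)\vee f$ handles distributivity;
\item the last conjunct $p\vee(p\to g)$ supplies the generator.
\end{itemize}
My plan is to instantiate $\mathrm{(iEx)}$ with $f:=b$ and $g$ chosen so that $p\to g$ encodes the relevant implication. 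I would then use Lemma \ref{LemmaInequalityEx} to convert inequalities into equations $x\to y=1$ that survive in both quotients, closely following the separation argument for $\mathrm{(Ex)}$ in \cite{AgMa26}. If the filter description of $\theta$ turns out to be too coarse, the fallback is to define $I_L$ directly on the image of an appropriate closure and verify residuation by hand.
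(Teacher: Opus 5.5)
Your orthomodular factor is exactly the paper's: $\psi$ comes from \cite{AgMa26}, compatibility with $\to$ and the Sasaki hook come from $\mathrm{(nS)}$, and orthomodularity comes from $\mathrm{(MP)}$ and Dalla Chiara--Giuntini. The Heyting factor and the injectivity argument, however, have a genuine gap.

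Your concrete description of $\theta$ is wrong. The elements $p\vee(p\to g)$ are identically $1$ in orthomodular algebras (this is the third conjunct in the proof of Lemma \ref{orthoiexlma}). They are not $1$ in Heyting algebras, where $p\vee(p\to 0)=p\vee\lnot p$. Collapsing them pushes toward the orthomodular side, not the Heyting side. For a counterexample, take $L$ to be the three-element Heyting chain $0<m<1$, which is an Ex-algebra.
\begin{itemize}
\item Since $m=m\vee(m\to 0)$, $m$ lies in your filter, so $m\wedge m=1\wedge m$ gives $m\mathrel{\theta}1$.
\item Since $\lnot m=0=\lnot 1$, we also have $m\mathrel{\psi}1$.
\item Hence $e(m)=e(1)$, and injectivity fails.
\end{itemize}
In addition, the relation $a\wedge d=b\wedge d$ is not in general compatible with $\vee$ in a non-distributive lattice.

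Your residuation check is also incomplete. From $c\wedge a\le b$, Lemma \ref{LemmaWes2} and $\mathrm{(W4)}$ only give $c\wedge a\le a\to(c\wedge a)\le a\to b$. Upgrading this to $c\le a\to b$ needs $c\le a\to c$, which is precisely the principle that fails in orthomodular algebras.

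If you instead keep $\theta$ abstract (the least congruence with Heyting quotient, which exists since Heyting algebras form a variety), residuation comes for free. But then all the content moves into proving $\psi\cap\theta=\Delta$, and there you offer only an intention to instantiate $\mathrm{(iEx)}$. Your stated sufficient condition, ``$a\wedge d\le b$ whenever $\lnot\lnot a\le\lnot\lnot b$,'' is false as stated: in the chain above take $d=1=1\vee(1\to g)$, $a=1$, $b=m$.

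The missing idea is the prime-filter representation. The paper sends $a$ to $a_I=\{P\in P(L)\mid a\in P\}$ in the Heyting algebra of downsets of $(P(L),\rset)$, and shows this preserves $\to$ using two lemmas.
\begin{itemize}
\item Lemma \ref{restPFT} is a restricted prime filter theorem. A filter containing some prime filter $P$ and disjoint from an ideal extends to a prime filter disjoint from that ideal. The proof applies $\mathrm{(iCl_2)}$ to $((a\wedge c)\vee(a\wedge d))\to b=1$ and uses the primeness of $P$.
\item Lemma \ref{LemmaExEmbeddingiCl} says that if $a\to b\notin P$, then some prime $Q\sset P$'s extension, i.e.\ some prime $Q\rset P$, has $a\in Q$ and $b\notin Q$. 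It uses $\mathrm{(iCl_1)}$ and Lemma \ref{LemmaWes2}.
\end{itemize}
In this argument the elements $p\vee(p\to g)$ are not collapsed; they are \emph{decided} by prime filters.

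Injectivity then follows from the separation fact (Lemma 4.7 of \cite{AgMa26}): if $\lnot a=\lnot b$ and $a\nleq b$, there is a prime filter containing $a$ but not $b$, so $a_I\not\sset b_I$. The kernel of $a\mapsto a_I$ is a Heyting congruence, so this is exactly what your abstract $\theta$ would need as well.
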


The theorem will be proved using the (lattice) Ex-Embedding Theorem from \cite{AgMa26}. There, we proved that if $(L, \wedge,\vee,0,1,\lnot)$ is an Ex-lattice (i.e., a fundamental lattice satisfying $\mathrm{(Ex)}$), then there are an ortholattice $O_L$, a Heyting lattice $I_L$, and a subdirect fundamental lattice embedding $e : L \to O_L \times I_L$.\\

We shall first recall this construction from \cite{AgMa26}.
Let $L$ be an Ex-lattice. We define an ortholattice $O_L$ and a Heyting lattice (i.e., a weakly pseudo-complemented distributive lattice) $I_L$ which are both homomorphic images of $L$. An element $a \in L$ is then mapped to a pair $e(a) = (a_O, a_I) \in O_L\times I_L$ which we describe below. 
The ortholattice $O_L$ is defined as a quotient of $L$ by the equivalence relation
\[a \sim b \leftrightarrow \lnot a = \lnot b.\]
We shall write $[a]_O$ for the equivalence class of $a$ (i.e., for $a_O$) if this helps with clarity.
It was shown in \cite{AgMa26} that $O_L$ is indeed an ortholattice and that this equivalence relation is congruent with the operations $\wedge, \vee$, and $\lnot$, i.e., that $[a \wedge b]_O = a_O \wedge b_O, [a\vee b]_O = a_O\vee b_O$, and $\lnot a_O = [\lnot a]_O$.

The Heyting lattice $I_L$ is defined as follows. Letting $(P(L), \rset)$ be the set of (proper) prime filters of $L$ ordered by reverse inclusion, we consider the $\to$-free reduct of the Heyting algebra $Dn(P(L))$ of open subsets of $(P(L), \rset)$ endowed with the downset topology. We then define $I_L$ to be the Heyting sublattice of $Dn(P(L))$ generated by sets of the form $a_I = \{P \in P(L) \mid a \in P\}$ for any $a \in L$, and show that the map $a \mapsto a_I$ is a surjective fundamental homomorphism. \\

We now extend these results to the setting of Ex-algebras. For the rest of this section, fix an Ex-algebra $(L, \land, \lor, 0, 1, \to)$. We first define $O_L$ as the quotient of $L$ determined by the equivalence relation $a \sim b \Leftrightarrow \neg a = \neg b$ and write $a_O$ for the equivalence class of an element $a$ modulo $\sim$.
We first claim that $\sim$ is a congruence relation on $L$. Because $(L, \land, \lor, 0,1, \neg)$, with $\neg a$ defined as $a \to 0$, is an Ex-lattice, we know already that $\sim$ preserves the operation $\land$, $\lor$ and $\neg$. We now claim the following:

\begin{lemma}
For all $a, b,c,d\in L$, if $a_O = c_O$ and $b_O = d_O$, then $[a\to b]_O = [c\to d]_O$.
\end{lemma}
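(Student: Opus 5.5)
The plan is to reduce the claim to the fact, already established in \cite{AgMa26} for Ex-lattices, that $\sim$ is congruent with $\wedge$, $\vee$ and $\lnot$. The axiom $\mathrm{(nS)}$ lets us do this, since it rewrites the class of $a \to b$ as the class of a $\{\wedge,\vee,\lnot\}$-term in $a$ and $b$.

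First, $\mathrm{(nS)}$ states $\lnot(a\to b) = \lnot(\lnot a \vee (a\wedge b))$. By the definition of $\sim$, this says precisely that
\[[a\to b]_O = [\lnot a \vee (a\wedge b)]_O,\]
and likewise $[c\to d]_O = [\lnot c \vee (c \wedge d)]_O$. So it suffices to show
\[[\lnot a \vee (a\wedge b)]_O = [\lnot c \vee (c\wedge d)]_O.\]

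Second, assume $a_O = c_O$ and $b_O = d_O$. The $\{\wedge,\vee,\lnot\}$-reduct of $L$ is an Ex-lattice by the lemma showing that $\mathrm{(iEx)}$ implies $\mathrm{(Ex)}$. Hence $\sim$ respects $\wedge$, $\vee$ and $\lnot$, which gives the following.
\begin{itemize}
\item Respecting $\lnot$ gives $[\lnot a]_O = \lnot a_O = \lnot c_O = [\lnot c]_O$.
\item Respecting $\wedge$ gives $[a\wedge b]_O = a_O \wedge b_O = c_O\wedge d_O = [c\wedge d]_O$.
\item Respecting $\vee$ then combines these two equalities into the desired one.
\end{itemize}
Chaining this with the two equalities from the first step yields $[a\to b]_O = [c\to d]_O$.

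I do not expect a serious obstacle. The only point needing care is that the congruence property for $\lnot$ is the one for the negation defined as $a \to 0$ in $L$, and that this is the negation under which $L$ is an Ex-lattice. That holds by the conventions fixed at the start of the section.
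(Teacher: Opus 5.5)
Your proposal is correct and follows essentially the same route as the paper: use $\mathrm{(nS)}$ to identify $[a\to b]_O$ with $[\lnot a \vee (a\wedge b)]_O$. Then invoke the already-known congruence of $\sim$ with respect to $\wedge$, $\vee$, $\lnot$ on the Ex-lattice reduct, and chain the resulting equivalences.
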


\proof
Suppose that $a_O = c_O$ and $b_O = d_O$. We must show that $\neg (a \to b) = \neg(c\to d)$. Because $L$ is an Ex-algebra, it satisfies $\mathrm{(nS)}$. Hence we have $\neg (a \to b) = \neg (\neg a \lor (a \land b))$ and $\neg (c \to d) = \neg( \neg c \lor (c \land d))$.
Since $\sim$ preserves $\land$, $\lor$ and $\lnot$, we have that $a \sim c$ and $b \sim d$ entails
\begin{align*}
    a \to b &\sim \neg a \lor (a \land b) \sim \neg c \lor (c \land d)\sim c \to d,
\end{align*}
as desired. This shows that $[a \to b]_O = [c \to d]_O$.
\endproof

\begin{lemma} \label{orthomorphlma}
$(O_L, \wedge,\vee,0,1,\to)$ is a homomorphic image of $(L, \land,\lor,0,1,\to)$. Moreover, it is an orthomodular algebra.
\end{lemma}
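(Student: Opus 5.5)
The proof splits into two parts: first that the quotient map is a homomorphism, then that the quotient is orthomodular with the Sasaki implication.

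\emph{Homomorphism.} By the preceding lemma, $\sim$ is compatible with $\to$. By the results recalled from \cite{AgMa26}, it is also compatible with $\wedge$, $\vee$ and $\lnot$. So $\sim$ is a congruence of $(L,\wedge,\vee,0,1,\to)$, and the map $a\mapsto a_O$ is a surjective fundamental algebra homomorphism onto $O_L$ with the induced operations. Since $\lnot a := a\to 0$ in $L$ and the quotient map preserves $\to$ and $0$, the negation of $O_L$ defined from its $\to$ agrees with the orthocomplement $[\lnot a]_O$ from \cite{AgMa26}. In particular, $O_L$ is an ortholattice, so double-negation elimination holds in $O_L$.

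\emph{Sasaki axiom.} We need $a_O\to b_O = \lnot a_O\vee(a_O\wedge b_O)$. By the definition of $\sim$, this amounts to $\lnot(a\to b)=\lnot(\lnot a\vee(a\wedge b))$, which is exactly $\mathrm{(nS)}$. So the Sasaki axiom holds in $O_L$.

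\emph{Modus ponens.} Since $\mathrm{(MP)}$ is an equation valid in $L$, it passes to the homomorphic image, giving $a_O\wedge(a_O\to b_O)=a_O\wedge b_O\le b_O$. Hence $O_L$ is an orthomodular algebra: a fundamental algebra with double-negation elimination, modus ponens and the Sasaki axiom.

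If the intended notion also requires the lattice to be orthomodular, this follows from the preceding facts. Dalla Chiara and Giuntini \cite{DCGi02} show that in an ortholattice the Sasaki hook satisfies modus ponens if and only if the lattice is orthomodular. One can also check it directly: if $a\le b$, then modus ponens applied to $\lnot a \to \lnot b$ in the form $\lnot b\wedge(\lnot b\to\lnot a)\le \lnot a$, together with De Morgan, yields $a\vee(\lnot a\wedge b)=b$.

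The main obstacle is confirming that $\mathrm{(nS)}$ exactly expresses the Sasaki identity modulo $\sim$. This is immediate from the definition of $\sim$. Everything else transfers through the congruence because the relevant axioms are equations.
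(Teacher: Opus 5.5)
Your proof is correct and is essentially the paper's argument: $\mathrm{(nS)}$ identifies the implication on $O_L$ with the Sasaki hook, $\mathrm{(MP)}$ transfers along the quotient map, and $O_L$ is already an ortholattice by \cite{AgMa26}. The only differences are that the paper concludes orthomodularity via Dalla Chiara--Giuntini while you check the defining clauses directly (which suffices for the paper's definition), and that your optional direct check has a slip: the instance you display, $\lnot b\wedge(\lnot b\to\lnot a)\le\lnot a$, is trivial when $a\le b$ (then $\lnot b\to\lnot a=b\vee\lnot b=1$), whereas the useful instance is $\lnot a\wedge(\lnot a\to\lnot b)\le\lnot b$, i.e.\ $\lnot a\wedge(a\vee\lnot b)\le\lnot b$, whose orthocomplement gives $b\le a\vee(\lnot a\wedge b)$.
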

\proof
For the first part, we know already that the map $a \mapsto a_O$ is a surjective lattice homomorphism that preserves $\neg$. Hence we only need to show that $[a \to b]_O = a_O \to b_O$ for any $a, b\in L$. Since \[a_O \to b_O = \neg a_O \lor (a_O \land b_O) = [\neg a \lor (a \land b)]_O,\] this amounts to showing that $a\to b \sim \neg a \lor (a \land b),$ which holds by $\mathrm{(nS)}$.
For the second part, we know already that $O_L$ is an ortholattice. By the result of Dalla Chiara and Giuntini \cite{DCGi02}, it then suffices to show that the principle $a \wedge (\lnot a \vee (a\wedge b)) \leq b$ is valid in $O_L$. But this is immediate from the fact that $\mathrm{(MP)}$ holds in $L$ together with the first part. Hence $O_L$ is an orthomodular algebra.
\endproof

We have shown that the mapping $a \mapsto a_O$ is a (surjective) fundamental algebra homomorphism. We now turn to $I_L$. Following \cite{AgMa26}, we consider first the Heyting algebra $Dn(P(L))$ of open subsets of the topological space $(P(L), \rset)$, where $P(L)$ is the set of all (proper) prime filters on $L$ and open sets are subsets of $P(L)$ which are downward-closed (here, a prime filter $Q$ is below $P$ if $P \sset Q$). We let $a \mapsto a_I$ maps any $a \in L$ to the set $a_I = \{P \in P(L) \mid a \in P\}$, and define $I_L$ as the Heyting subalgebra of $Dn(P(L))$ generated by sets of the form $a_I$ for some $a \in A$. We now claim that the map $a \mapsto a_I$ is a surjective fundamental algebra homomorphism. It is routine to check that this map preserves $\land$ and $\lor$, but we must show that $[a \to b]_I = a_I \to b_I$, where $a_I \to b_I$ is the Heyting implication in $Dn(P(L)$, i.e.: \[a_I \to b_I = \{P \in P(L) \mid \forall Q( (Q \rset P \land a \in Q)\to b \in Q)\}.\]

We start by proving the following restricted version of the Prime Filter Theorem.

\begin{lemma} \label{restPFT}
    Let $L$ be an Ex-algebra, $P$ a prime filter on $L$ and $F$ and $I$ a filter and an ideal on $L$ respectively. If $P \sset F$ and $F \cap I = \emptyset$, then there is a prime filter $Q$ on $L$ such that $F \sset Q$ and $Q \cap I = \emptyset$. 
\end{lemma}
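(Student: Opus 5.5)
The plan is the usual maximality argument from the distributive case, with distributivity replaced by the Ex-law obtained from \textup{(iEx)} with $g=0$. By Zorn's Lemma, the set of filters $Q'$ with $F \sset Q'$ and $Q' \cap I = \emptyset$ has a maximal element $Q$; it is nonempty because it contains $F$, and a union of a chain of such filters is again such a filter. Then $Q$ is proper, since $0 \in I$ would otherwise lie in $Q$. So it remains to show that $Q$ is prime, and only this step can use the hypotheses on $L$ and on $P$.

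Suppose towards a contradiction that $c \vee e \in Q$ but $c, e \notin Q$. By maximality, the filters generated by $Q \cup \{c\}$ and by $Q \cup \{e\}$ both meet $I$. Taking the meet of the two witnesses in $Q$, we get a single $a \in Q$ and elements $i_1, i_2 \in I$ with $a \wedge c \leq i_1$ and $a \wedge e \leq i_2$. Put $i := i_1 \vee i_2 \in I$. Then $(a\wedge c)\vee(a\wedge e) \leq i$, while $a \wedge (c \vee e) \in Q$. In a distributive lattice this is already the contradiction. Here we only have the Ex-law instance
\[ a \wedge (c \vee e) \wedge \lnot\lnot f \leq (a\wedge c)\vee(a\wedge e)\vee f \]
(the other conjuncts on the right can be dropped). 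We must also make the first conjunct of the antecedent of \textup{(Ex)} equal to $1$. To do so, we choose $b := 0$, so that $\lnot[a\wedge((b\wedge c)\vee(b\wedge d))]=\lnot 0 = 1$.

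The crux is choosing $f$. We need $f$ below something in $I$ and $\lnot\lnot f \in Q$; then the displayed inequality puts an element of $Q$ below $i \vee f \in I$, which is the contradiction. The natural choice is $f := i$, which works whenever $\lnot\lnot i \in Q$. If $\lnot\lnot i \notin Q$, I would use the prime filter $P \sset F \sset Q$. Since $i \in I$ and $P \cap I = \emptyset$, we have $i \notin P$. I would then try to show that $\lnot i \in Q$ (or at least that $\lnot i$ can be added to $Q$ without meeting $I$, so maximality forces $\lnot i\in Q$). If so, $a \wedge \lnot i \in Q$, and the Ex-law can be reapplied with $c, e$ replaced by $c \wedge \lnot i$ and $e \wedge \lnot i$ (or with $f$ replaced by a suitable element related to $\lnot i$). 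The goal is to push the problematic part of $a\wedge(c\vee e)$ into the ``orthocomplemented'' regime, where the third conjunct $b' \vee \lnot b'$ of \textup{(Ex)} (with $b' = b\wedge(c\vee d)$ for a nonzero choice of $b$) supplies an excluded-middle case split.

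This second case is the main obstacle. Primality of $P$ is the only extra information available, and I expect it to enter exactly by deciding, for the relevant element $x$ (here $x = i$ or $x = a\wedge(c\vee e)$), which of $x$ and $\lnot x$ the filter $Q$ can be extended by, via the instance of \textup{(Ex)} giving $x \vee \lnot x$ in the consequent. If this direct route stalls, I would fall back on the lattice Ex-Embedding Theorem of \cite{AgMa26}. Filters above a prime filter should correspond to filters of the distributive factor $I_L$, where the ordinary Prime Filter Theorem applies. A prime filter obtained there would then be pulled back along $a \mapsto a_I$, using that the image of $P$ witnesses properness.
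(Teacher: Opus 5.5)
\textbf{There is a genuine gap.} The case $\lnot\lnot i\notin Q$ is never handled, and the routes you sketch for it do not close it.

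You work only with the $g=0$ instance of \textup{(iEx)}, i.e.\ with \textup{(Ex)}. There the excluded-middle conjunct $b'\vee\lnot b'$ is available only under the antecedent $\lnot[a\wedge((b\wedge c)\vee(b\wedge d))]$. So it gives you a split when the distributive join $(a\wedge c)\vee(a\wedge e)$ is $0$, which is why it handles $I=\{0\}$. It does not give one when that join merely lies below a nonzero $i\in I$.

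Your fallback through the lattice Ex-Embedding Theorem is circular. To pull back a prime filter of $I_L$, you first need the images of $F$ and $I$ to be disjoint in $I_L$. That means $u_I\not\sset w_I$ for every $u\in F$ and $w\in I$, i.e.\ that every such pair is separated by a prime filter of $L$. This is essentially the statement you are proving; the separation result of \cite{AgMa26} only covers pairs with $\lnot u=\lnot w$.

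Finally, primality of $P$ is not the only extra information available. $L$ is an Ex-algebra, so you have $\to$ with arbitrary consequents, together with \textup{(MP)} and \textup{(M)}.

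\textbf{The missing idea} is to make the excluded-middle split relative to $i$ by taking $g:=i$ in \textup{(iEx)}. In your notation, put $x:=a\wedge(c\vee e)\in Q$. From $(a\wedge c)\vee(a\wedge e)\leq i$, Lemma~\ref{LemmaInequalityEx} gives $\big((a\wedge c)\vee(a\wedge e)\big)\to i=1$. Then \textup{(iCl$_2$)}, with $b:=a$, $d:=e$, $g:=i$, yields
\[ 1=\big((a\wedge c)\vee(a\wedge e)\big)\to i\leq x\vee(x\to i). \]
Since $1\in P$ and $P$ is prime, one of two things happens:
\begin{itemize}
\item $x\in P$. Then $c\vee e\in P$, so $c\in P\sset Q$ or $e\in P\sset Q$.
\item $x\to i\in P\sset Q$. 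Then \textup{(MP)} gives $x\wedge i=x\wedge(x\to i)\in Q$, hence $i\in Q$.
\end{itemize}
Both contradict your assumptions $c,e\notin Q$ and $Q\cap I=\emptyset$. This is the paper's argument. It needs no choice of $f$, no case distinction on $\lnot\lnot i$, and no appeal to the embedding theorem. Your Zorn's-lemma setup and the reduction to a single $a\in Q$ and $i\in I$ are fine and can be kept as they are.
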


\proof 
Let $P$, $F$ and $I$ as in the statement of the lemma, and let $Q$ be a maximal filter extending $F$ and disjoint from $I$, which exists by Zorn's Lemma. We claim that $Q$ is prime. To see this, suppose that $c \notin Q$ and $d \notin Q$. We want to show that $c \lor d \notin Q$. Since $Q$ is maximal among filters disjoint from $I$, there must be $a \in Q$ and $b \in I$ such that $a \land c \leq b$ and $a \land d \leq b$. This means that $(a \land c) \lor (a \land d) \leq b$ and thus, by, Lemma \ref{LemmaInequalityEx} together with $\mathrm{iCl2}$, we have \[1 \leq ((a \land c) \lor (a \land d)) \to b \leq (a \land (c\lor d)) \lor ((a \land (c\lor d)) \to b).\] Since $1 \in P$ and $P$ is prime, this means that either $a \land (c \lor d) \in P$ or $(a \land (c \lor d)) \to b \in P$. In the first case, we have that $c \lor d \in P$, and hence either $c \in P \sset Q$ or $d \in P \sset Q$, contradicting our assumption. In the second case, we have that $(a\land (c\lor d)) \to b \in P \sset Q$. But then, since $a \in Q$, $\mathrm{(MP)}$ entails that $c\lor d \in Q$ implies $b \in Q$. Hence $c \lor d \notin Q$, and $Q$ is prime, as desired.
\endproof

The following is a straightforward application of the previous lemma.

\begin{lemma}\label{LemmaExEmbeddingiCl}
Let $L$ be an Ex-algebra. Suppose that $P$ is a prime filter on $L$ such that $a\to b \not \in P$. Then, there is a prime filter $Q$ extending $P$ such that $a \in Q$ but $b \not \in Q$.
\end{lemma}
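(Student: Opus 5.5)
The plan is to apply Lemma \ref{restPFT} with the filter $F$ generated by $P \cup \{a\}$ and the principal ideal $I = {\downarrow} b$. By construction $P \sset F$, and if $F \cap I = \emptyset$, Lemma \ref{restPFT} gives a prime filter $Q \rset F \rset P$ with $a \in Q$ and $b \notin Q$, as required. So everything reduces to checking that $F \cap I = \emptyset$, i.e.\ that $b \notin F$.

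Since $P$ is a filter, $F = \{x \in L \mid p \land a \leq x \text{ for some } p \in P\}$. Suppose, towards a contradiction, that $p \land a \leq b$ for some $p \in P$. I want to conclude that $a \to b \in P$, contradicting the hypothesis. Two routes seem natural.

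\emph{First route, via (iCl$_1$).} From $a \land p \leq b$, Lemma \ref{LemmaInequalityEx} yields $(a \land p) \to b = 1$. Instantiating (iCl$_1$) with the roles of $a$ and $b$ taken by $p$ and $a$ (and $g := b$), we get
\[1 \land p = \big((p \land a)\to b\big) \land p \leq a \lor (a \to b).\]
Hence $a \lor (a\to b) \in P$, and primeness gives $a \in P$ or $a \to b \in P$. The second option contradicts the hypothesis. In the first, $a \in P$ and $p \in P$ give $a \land p \in P$, so $b \in P$. It then remains to derive $a \to b \in P$ from $a \land b \in P$, which follows from Lemma \ref{LemmaWes2} ($a \land b \leq a \to b$) and upward closure. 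Either way we contradict $a \to b \notin P$.

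\emph{Degenerate cases.} If $b$ happens to be $1$ there is nothing to prove, since then $a \to b = 1 \in P$ by Lemma \ref{LemmaInequalityEx}. Also, $F$ is proper and disjoint from $I$ as soon as $b \notin F$, so no further cases arise. The only step needing care is matching the variable substitution in (iCl$_1$) correctly, i.e.\ that the conjunct ordering $p \land a$ versus $a \land p$ is harmless. This holds because $\land$ is commutative, so the antecedent $(p\land a)\to b$ is literally the element computed above. I expect no real obstacle beyond this bookkeeping.
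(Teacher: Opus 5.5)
Your proof is correct and uses the same ingredients as the paper's. You take the filter generated by $P\cup\{a\}$, apply (iCl$_1$) once Lemma \ref{LemmaInequalityEx} gives $(a\wedge p)\to b=1$, rule out $a\in P$ with Lemma \ref{LemmaWes2}, and finish with Lemma \ref{restPFT}. The only difference is organizational: the paper first splits on whether $a\vee(a\to b)\in P$ (implicitly taking $Q=P$ in that case), whereas you fold that case into the single contradiction argument, which is slightly more streamlined.
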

\proof
Suppose $P$ is a prime filter not containing $a \to b$. If $a \vee (a\to b) \in P$, then $a \in P$, as $P$ is prime. We must already have $b \not \in P$, as otherwise we would have $a\to b \in P$, since $a\wedge b \leq a\to b$ by Lemma \ref{LemmaWes2}.
Thus, we may assume that $a \vee (a\to b)\not\in P$. We claim that $P \cup \{a\}$ generates a filter which does not contain $b$. If not, then there is $c \in P$ such that $a \wedge c \leq b$, so $(a\wedge c)\to b = 1$, by Lemma \ref{LemmaInequalityEx}. Thus, $((a \wedge c) \to b) \wedge c \in P$.
By $\mathrm{(iCl_1)}$, we have $((a \wedge c) \to b) \wedge c\leq a \vee (a\to b)$,
so $a \vee (a\to b) \in P$, contrary to the assumption above.

Thus, $P \cup\{a\}$ generates a filter disjoint from the ideal generated by $\{b\}$. By Lemma \ref{restPFT}, there is a prime filter $Q$ extending $P \cup \{a\}$ that does not contain $b$, as desired.
\endproof

\begin{lemma} \label{heytmorphlma}
The mapping $a \to a_I$ is a surjective fundamental algebra homomorphism. Moreover, if $\neg a = \neg b$ and $a \nleq b$, then $a_I \not \sset b_I$.
\end{lemma}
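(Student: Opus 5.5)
The proof has three parts: (i) $\to$ is preserved; (ii) surjectivity; (iii) the separation claim.

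\emph{Preservation of $\to$.} Since preservation of $\land$, $\lor$ and the lattice bounds is routine, the substance is the identity $[a\to b]_I = a_I \to b_I$ in $Dn(P(L))$.

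For the inclusion from left to right, take $P$ with $a\to b\in P$ and any prime $Q\rset P$ with $a\in Q$. Then $a\wedge(a\to b)\in Q$, and $\mathrm{(MP)}$ gives $b\in Q$. So $P\in a_I\to b_I$.

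For the converse, suppose $a\to b\notin P$. Lemma~\ref{LemmaExEmbeddingiCl} gives a prime $Q\rset P$ with $a\in Q$ and $b\notin Q$. Hence $P\notin a_I\to b_I$.

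Negation is the case $b=0$. Here $0_I=\emptyset$, because prime filters are proper, and so $[\neg a]_I = a_I\to\emptyset$ is the Heyting pseudo-complement. Thus $a\mapsto a_I$ is a fundamental algebra homomorphism into $Dn(P(L))$.

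\emph{Surjectivity.} $I_L$ is defined as the Heyting subalgebra generated by the sets $a_I$. The image of $L$ contains these generators and, by the previous step, is closed under $\wedge$, $\vee$, $\to$, $0$ and $1$. So the image is already a Heyting subalgebra and therefore equals $I_L$.

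\emph{The separation claim.} Assume $\neg a=\neg b$ and $a\nleq b$. I want a prime filter containing $a$ but not $b$. By Lemma~\ref{LemmaInequalityEx}, $a\to b\neq 1$.

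The natural tool is Lemma~\ref{LemmaExEmbeddingiCl}, applied to a prime filter $P$ with $a\to b\notin P$. It yields $Q\rset P$ with $a\in Q$ and $b\notin Q$, hence $a_I\not\sset b_I$. So the task reduces to finding a single prime filter omitting $a\to b$.

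This is where I expect the main obstacle, because Lemma~\ref{restPFT} only extends filters that already lie above some prime filter. I would try two routes, in this order.

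\begin{itemize}
\item First, use the lattice Ex-Embedding Theorem of \cite{AgMa26} directly on the reduct. It already guarantees that $a\mapsto a_I$ is injective on elements with equal negations: if $\neg a=\neg b$ and $a\nleq b$, it produces a prime filter separating $a$ from $b$, since the embedding into $O_L\times I_L$ is order-reflecting and the $O_L$ coordinates of $a$ and $b$ coincide. Because the $\{\wedge,\vee,\neg\}$-reduct of $L$ is an Ex-lattice, this result can simply be quoted.
\item Failing that, argue directly. Take $F=\uparrow\! 1=\{1\}$, which is contained in every prime filter, and the ideal ${\downarrow}(a\to b)$; these are disjoint because $a\to b\neq1$. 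To apply Lemma~\ref{restPFT} I need some prime $P\sset\{1\}$, which exists only if $\{1\}$ is itself prime. Otherwise, I would rerun the maximal-filter argument of Lemma~\ref{restPFT} starting from $\{1\}$. That argument only used $P$ to decide the disjunction $(a\wedge(c\vee d))\vee((a\wedge(c\vee d))\to b)$ supplied by $\mathrm{(iCl_2)}$, and I would instead try to use the hypothesis $\neg a=\neg b$ to resolve that disjunction.
\end{itemize}

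I would pursue the first route, citing the earlier result, and keep the second only as a fallback.
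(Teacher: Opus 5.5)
Your proposal is correct and follows the paper's proof: you preserve $\to$ via $\mathrm{(MP)}$ in one direction and Lemma~\ref{LemmaExEmbeddingiCl} in the other, and your surjectivity argument (the image is closed under all operations and contains the generators of $I_L$) fills in a step the paper leaves implicit. For the separation claim the paper simply cites Lemma 4.7 of \cite{AgMa26}, which is exactly what your first route extracts from the injectivity of $a\mapsto(a_O,a_I)$ together with $a_O=b_O$, so your fallback route is unnecessary.
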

\proof
For the first part, we only need to show that if $P \in P(L)$, $a\to b \in P$ if and only if for all $Q$ extending $P$ with $a \in Q$, we have $b \in Q$. The right-to-left direction is Lemma \ref{LemmaExEmbeddingiCl}. The left-to-right direction follows because any such $Q$ is a filter on $L$ containing $a\to b$ and $a$, thus also $a \wedge (a \to b)$. Since $a \wedge (a\to b) \leq b$, $Q$ also contains $b$. For the second part, Lemma 4.7 in \cite{AgMa26} establishes that whenever $\neg a = \neg b$ and $a \nleq b$ in an Ex-lattice $L$, there is a prime filter $P$ over $L$ containing $a$ and not containing $b$. Since Ex-algebras are also Ex-lattices, the result applies here as well.
\endproof 

Our main theorem now follows: since each mapping $a \mapsto a_O$ and $a \mapsto a_I$ is a fundamental algebra homomorphism, the map $e: L \to O_L \times I_L$ given by $e(a) = (a_O, a_I)$ is also a fundamental algebra homomorphism. To see that it is injective, suppose that $a \nleq b$. We distinguish two cases. If $\neg a \neq \neg b$ then $a_O \nleq b_O$. Otherwise, by Lemma \ref{heytmorphlma}, we have $a_I \not \sset b_I$. Either way, it follows that $e(a) \nleq e(b)$, as desired. Lastly, since the composition of $e$ with the projections $\pi_1: O_L\times I_L \to O_L$ and $\pi_2: O_L \times I_L \to I_L$ coincides with the surjective maps $a \mapsto a_O$ and $a \mapsto a_I$ respectively, it follows that $e$ is a subdirect embedding. This completes the proof of the iEx-Embedding Theorem.
\endproof

\section{Implicative Constructive Quantum Logics}\label{icqsec}
In this section, we derive some consequences of the iEx-Embedding Theorem established above for constructive quantum logics equipped with an implication. We shall call such logics \textit{implicative constructive quantum logics} ($ICQ$ logics for short).

\subsection{The Minimal $ICQ$ Logic}
We start by axiomatizing the intersection of intuitionistic logic and orthomodular logic, which can arguably be considered the minimal $ICQ$ logic.

\begin{theorem}\label{TheoremEquivExalgebras}
Let $\varphi$ and $\psi$ be formulas.
The following are equivalent:
\begin{enumerate}
\item $\varphi\vdash\psi$ is valid in all Heyting algebras and in all orthomodular algebras.
\item $\varphi\vdash\psi$ is valid in all Ex-algebras.
\end{enumerate}
\end{theorem}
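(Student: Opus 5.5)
We prove the two directions separately. The direction from 2 to 1 is immediate from earlier results. By Lemmas \ref{orthoiexlma} and \ref{orthoexallma}, every orthomodular algebra is an Ex-algebra. By Lemmas \ref{heytingiexlma} and \ref{heytingexallma}, every Heyting algebra is an Ex-algebra. Hence any sequent $\varphi\vdash\psi$ valid in all Ex-algebras is in particular valid in all Heyting algebras and in all orthomodular algebras.

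For the direction from 1 to 2, we use the iEx-Embedding Theorem (Theorem \ref{mainthm}) by contraposition. Suppose $\varphi\vdash\psi$ fails in some Ex-algebra $L$, witnessed by a valuation $v$ into $L$ with $v(\varphi)\nleq v(\psi)$. Let $e: L\to O_L\times I_L$ be the subdirect fundamental algebra embedding provided by Theorem \ref{mainthm}. Consider the valuation $e\circ v$ into $O_L\times I_L$.

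Since $e$ preserves $\wedge,\vee,0,1$ and $\to$, an induction on formulas shows that $e(v(\chi)) = (e\circ v)(\chi)$ for every formula $\chi$. Since $e$ is an injective lattice homomorphism, it is an order embedding: $e(x)\leq e(y)$ implies $e(x\wedge y)=e(x)\wedge e(y) = e(x)$, hence $x\wedge y=x$ by injectivity, i.e.\ $x\leq y$. Therefore $(e\circ v)(\varphi)\nleq (e\circ v)(\psi)$ in the product.

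The order on $O_L\times I_L$ is componentwise and the operations are computed componentwise. So, composing with the projections $\pi_1$ and $\pi_2$, we get
\[\pi_1(e(v(\varphi)))\nleq \pi_1(e(v(\psi)))\quad\text{or}\quad \pi_2(e(v(\varphi)))\nleq\pi_2(e(v(\psi))).\]
The maps $\pi_1\circ e$ and $\pi_2\circ e$ are fundamental algebra homomorphisms, so $\pi_1\circ e\circ v$ and $\pi_2\circ e\circ v$ are valuations into $O_L$ and $I_L$ respectively that commute with formula evaluation. Thus $\varphi\vdash\psi$ fails either in the orthomodular algebra $O_L$ or in the Heyting algebra $I_L$, which contradicts 1.

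There is no real obstacle here, since all the work was done in Theorem \ref{mainthm}. The only points needing care are routine. First, the homomorphism must preserve $\to$ and not merely $\neg$; this is what Lemmas \ref{orthomorphlma} and \ref{heytmorphlma} provide, and it is needed because formulas in the full signature contain $\to$. Second, injectivity must be upgraded to order-reflection, which is the meet argument above. Finally, if validity in Heyting and orthomodular algebras is meant via the standard semantics (Sasaki hook, Heyting residual), the structures $O_L$ and $I_L$ are algebras of exactly these kinds by Lemma \ref{orthomorphlma} and the construction of $I_L$ as a Heyting subalgebra of $Dn(P(L))$.
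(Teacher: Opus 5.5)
Your proof is correct and follows the paper's argument exactly. The easy direction uses that every Heyting algebra and every orthomodular algebra is an Ex-algebra, and the converse applies the iEx-Embedding Theorem to a refuting valuation. You additionally spell out the ``standard argument in algebraic logic'' that the paper leaves implicit: the injective homomorphism reflects the order, and a failure in the product is a failure in $O_L$ or in $I_L$ via the projections.
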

\proof
It was verified in Section \ref{SectAxiomiEx} that every Heyting algebra and every orthomodular algebra is an Ex-algebra. Conversely, suppose that $\varphi\vdash \psi$ is not valid in all Ex-algebras, so that $v(\varphi) \nleq v(\psi)$ holds for some Ex-algebra $L$ and some valuation $v$. By the iEx-Embedding Theorem, there is an orthomodular algebra $O_L$ a Heyting Algebra $I_L$ and a subdirect embedding
$e: L \to O_L\times I_L$. By a standard argument in algebraic logic, this implies that the entailment $\varphi\vdash\psi$ is falsifiable in either $O_L$ or $I_L$, as desired.
\endproof

As a consequence of Theorem \ref{TheoremEquivExalgebras}, we immediately obtain:
\begin{theorem}
The joint validities of intuitionistic logic and orthomodular logic are axiomatized over Fundamental Logic with implication by $\mathrm{(iEx)}$ together with the following principles:
\begin{enumerate}
\item[\textup{(nS)}] $\lnot (a \to b) = \lnot(\lnot a \vee (a\wedge b))$;
\item[\textup{(MP)}] $a \wedge (a \to b) = a \wedge b$;
\item[\textup{(M)}] $(a \wedge b) \to b = 1$;
\item[\textup{(W4)}] $a \to (b \land c) \leq a \to b$.
\end{enumerate}
\end{theorem}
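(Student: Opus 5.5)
This follows from Theorem \ref{TheoremEquivExalgebras} by the usual correspondence between the logic of a class of algebras and the axioms defining that class. Let $\mathbf{iEx}$ denote the smallest consequence relation on $\{\wedge,\vee,\to\}$-formulas (with $\lnot\varphi := \varphi\to\bot$) that contains the introduction--elimination rules of fundamental logic and all substitution instances of $\mathrm{(iEx)}$, $\mathrm{(nS)}$, $\mathrm{(MP)}$, $\mathrm{(M)}$ and $\mathrm{(W4)}$. Here each equation $s=t$ is read as the pair of sequents $s\vdash t$ and $t\vdash s$, and $s = 1$ is read as $\top\vdash s$. The goal is to show that $\varphi\vdash\psi$ is derivable in $\mathbf{iEx}$ if and only if $\varphi\vdash\psi$ is valid in all Ex-algebras. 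Theorem \ref{TheoremEquivExalgebras} then identifies the latter with joint validity in Heyting algebras and orthomodular algebras, and these are exactly the joint validities of intuitionistic logic and orthomodular logic.

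\emph{Soundness} is a routine induction on derivations. The rules of fundamental logic hold in every fundamental algebra, because the $\{\wedge,\vee,\lnot\}$-reduct is a weakly pseudo-complemented bounded lattice. The five axioms hold by the definition of Ex-algebra. Validity is also preserved under substitution, since a valuation composed with a substitution is again a valuation.

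\emph{Completeness} uses a Lindenbaum--Tarski construction.
\begin{enumerate}
\item Define $\varphi\equiv\psi$ iff both $\varphi\vdash\psi$ and $\psi\vdash\varphi$ are derivable. On the quotient, $[\varphi]\leq[\psi]$ iff $\varphi\vdash\psi$ is derivable. The lattice rules make this a bounded lattice, with $[\bot]$ and $[\top]$ as bounds and $\wedge,\vee$ given by the connectives.
\item Check that $\equiv$ is a congruence for $\to$. This is the step I expect to be the main obstacle. Fundamental logic only yields a contraposition rule for $\lnot$, so there is no primitive rule saying that $\to$ respects provable equivalence in either argument. I would try two approaches. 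The first is to derive the replacement rules from the axioms, for instance using Lemma \ref{LemmaInequalityEx}-style reasoning with $\mathrm{(M)}$, $\mathrm{(MP)}$ and $\mathrm{(W4)}$. If that fails, I would instead read ``fundamental logic with implication'' as including the standard rule that provably equivalent formulas may be substituted inside $\to$; this is implicit in the paper's treating fundamental algebras as an algebraic semantics. With the congruence in place, the quotient is a fundamental algebra. Antitonicity, semi-complementation and double-negation introduction for $\lnot$ are exactly rules 13, 9 and 8.
\item The quotient satisfies $\mathrm{(iEx)}$, $\mathrm{(nS)}$, $\mathrm{(MP)}$, $\mathrm{(M)}$ and $\mathrm{(W4)}$, because every instance of these axioms on equivalence classes is the class of a substitution instance of the corresponding axiom, and all such instances are derivable. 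Hence the quotient is an Ex-algebra.
\end{enumerate}
Now suppose $\varphi\vdash\psi$ is not derivable. Under the canonical valuation $p\mapsto[p]$ we have $[\varphi]\not\leq[\psi]$, so $\varphi\vdash\psi$ fails in some Ex-algebra. By Theorem \ref{TheoremEquivExalgebras} it then fails in some Heyting algebra or some orthomodular algebra. This gives the claimed axiomatization.
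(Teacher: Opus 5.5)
Your proposal is correct and follows the paper's route: the paper likewise gets the theorem directly from Theorem~\ref{TheoremEquivExalgebras}, the definition of Ex-algebras, and the algebraic completeness of intuitionistic and orthomodular logic, leaving implicit the Lindenbaum--Tarski step you spell out. In your step 2, the first option as sketched is circular, because the proof of Lemma~\ref{LemmaInequalityEx} rewrites $a\to b$ as $(a\wedge b)\to b$ via $a=a\wedge b$ and so already presupposes replacement inside $\to$; your fallback reading is the one that works, and it is the paper's own, since the paper treats these logics throughout as logics of classes of algebras.
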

\proof
This follows from Theorem \ref{TheoremEquivExalgebras} together with the definition of Ex-algebras, the completeness of intuitionistic logic with respect to Heyting algebras, and the completeness of orthomodular logic with respect to orthomodular algebras (see Dalla Chiara and Giuntini \cite{DCGi02}).
\endproof 

Analyzing the proof of the iEx-Embedding Theorem, we also obtain the following:
\begin{theorem} \label{altaxthm}
The joint validities of intuitionistic logic and orthomodular logic with implication are axiomatized over Fundamental Logic with implication by the following principles:
\begin{enumerate}
\item[\textup{(nS)}] $\lnot (a \to b) = \lnot (\lnot a\vee (a\wedge b))$.
\item[\textup{(MP)}] $a \wedge (a\to b) = a \wedge b$.
\item[\textup{(M)}] $(a\wedge b)\to b = 1$.
\item[\textup{(W4)}] $a \to (b\wedge c)\leq a \to b$.
\item[\textup{(iCl$_1$)}] $a\wedge ((a\wedge b) \to c) \leq b \vee (b\to c)$.
\item[\textup{(iCl$_2$)}] $((b \wedge c) \vee (b\wedge d)) \to a \leq (b \wedge (c\vee d)) \vee \big((b \wedge (c\vee d)) \to a\big)$.
\item[\textup{(Nu)}] $\lnot\lnot a \wedge \lnot\lnot b \leq \lnot\lnot (a\wedge b)$.
\item[\textup{(Vi)}] $a \wedge (b\vee c)\wedge \lnot\lnot d \leq (a \wedge b) \vee (a\wedge c) \vee d$.
\end{enumerate}
\end{theorem}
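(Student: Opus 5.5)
Let $\mathcal{K}$ be the class of fundamental algebras satisfying the eight listed principles. The approach is to show that the iEx-Embedding Theorem (Theorem \ref{mainthm}) holds verbatim for every algebra in $\mathcal{K}$; completeness then follows exactly as in Theorem \ref{TheoremEquivExalgebras}. Soundness is the easy half. Lemmas \ref{orthoexallma} and \ref{heytingexallma} already give (nS), (MP), (M), (W4) in orthomodular and Heyting algebras. (iCl$_1$) and (iCl$_2$) follow from (iEx) by the substitutions in the lemma on (iCl), (iCl$_1$), (iCl$_2$), so by Lemmas \ref{orthoiexlma} and \ref{heytingiexlma} they hold in both classes. (Nu) holds intuitionistically by the standard $\lnot\lnot$-law. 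In orthomodular algebras it is trivial by double-negation elimination. (Vi) holds intuitionistically by distributivity. In orthomodular algebras it holds because $\lnot\lnot d = d$ and $d$ appears as a disjunct on the right.

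For completeness, fix $L \in \mathcal{K}$ and run through the proof of Theorem \ref{mainthm}, recording which axioms each step uses.

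\emph{Heyting component.} Lemmas \ref{LemmaInequalityEx} and \ref{LemmaWes2} use only (M), (MP) and (W4). Lemma \ref{restPFT} uses Lemma \ref{LemmaInequalityEx}, (iCl$_2$) and (MP). Lemma \ref{LemmaExEmbeddingiCl} uses Lemma \ref{LemmaWes2}, Lemma \ref{LemmaInequalityEx}, (iCl$_1$) and Lemma \ref{restPFT}. Hence the first half of Lemma \ref{heytmorphlma} goes through: $a \mapsto a_I$ is a surjective fundamental algebra homomorphism onto a Heyting algebra.

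\emph{Orthomodular component.} The congruence lemma and Lemma \ref{orthomorphlma} use only (nS) and (MP), together with the lattice-level facts from \cite{AgMa26}. Those facts are that $\sim$ is a congruence for $\land,\lor,\lnot$, that $O_L$ is an ortholattice, and Lemma 4.7 (separation of $a\nleq b$ with $\lnot a=\lnot b$ by a prime filter). They hold in any Ex-lattice.

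The main obstacle is the remaining step: showing that the $\{\land,\lor,\lnot\}$-reduct of $L$ is an Ex-lattice, i.e.\ satisfies (Ex) or some equivalent lattice axiomatization from \cite{AgMa26}, using only the listed principles.

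1. The natural decomposition of (Ex) mirrors that of (iEx). The first conjunct $\lnot\lnot(a\land f)$ comes from (Nu) and $a \le \lnot\lnot a$. The second conjunct $(a\land c)\lor(a\land e)\lor f$ comes from (Vi), using $\lnot\lnot f$ in the role of $\lnot\lnot d$.
2. The third conjunct is the $g=0$ instance of (iCl) with $\lnot x = x \to 0$. To obtain it, I would first derive (iCl) from (iCl$_1$) and (iCl$_2$). Put $p=(b\land c)\lor(b\land d)$ and $q=b\land(c\lor d)$, so $p\le q$, and apply (iCl$_1$) to the meet $a\land p$. The difficulty is passing from $(a\land p)\to 0$ to $p\to 0$ in the presence of $a$.
3. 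If that composition fails, I would instead use the fact that \cite{AgMa26} characterizes Ex-lattices by a lattice embedding theorem whose proof only needs (Nu), (Vi) and a prime-filter extension property. Lemma \ref{restPFT}, which is derived here from (iCl$_2$) alone, supplies that property directly.

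Either route yields the lattice Ex-Embedding ingredients. Injectivity and subdirectness then follow word for word as in the concluding argument of Section \ref{Sectmainthm}. That section's argument and the proof of Theorem \ref{TheoremEquivExalgebras} give the axiomatization.
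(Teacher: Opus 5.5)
Your plan matches the paper's: check soundness class by class, then re-run the proof of the iEx-Embedding Theorem and record which principle each step uses. Your bookkeeping for the Heyting side, and for the use of (nS) and (MP) on $O_L$, agrees with it.

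The step you call the main obstacle is not one, and your step~2 closes at once. Put $p=(b\wedge c)\vee(b\wedge d)$ and $q=b\wedge(c\vee d)$. The instance of (iCl$_1$) with $p$ for $b$ and $g$ for $c$ reads
\[
a\wedge\big((a\wedge p)\to g\big)\;\le\; p\vee(p\to g).
\]
This is exactly the passage from $(a\wedge p)\to g$ to $p\to g$ in the presence of $a$ that worried you. Then $p\le q$ and the instance $p\to g\le q\vee(q\to g)$ of (iCl$_2$) give (iCl). The first conjunct follows from $a\wedge\lnot\lnot f\le\lnot\lnot a\wedge\lnot\lnot f\le\lnot\lnot(a\wedge f)$ using (Nu), and the middle one from (Vi). So you derive all of (iEx), for arbitrary $g$, not merely (Ex).

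Hence every algebra in $\mathcal K$ is an Ex-algebra, and completeness follows directly from Theorem~\ref{TheoremEquivExalgebras}. You need neither to re-trace the embedding proof nor to rely on what the proofs in \cite{AgMa26} use internally. This is tighter than the paper's argument, whose completeness half rests on the remark that the imported lattice results use only (Nu) and (Vi).

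Your step~3 is essentially that remark, but Lemma~\ref{restPFT} is not the relevant ingredient. What is actually imported from \cite{AgMa26} is:
\begin{enumerate}
\item compatibility of $\sim$ with $\wedge$, which is (Nu): if $\lnot\lnot u=\lnot\lnot u'$ then $u'\wedge w\le\lnot\lnot u\wedge\lnot\lnot w\le\lnot\lnot(u\wedge w)$;
\item the ortholattice structure of $O_L$, which needs only the fundamental-lattice axioms;
\item Lemma~4.7, which follows from (Vi) by a plain Zorn argument. A maximal filter $Q\supseteq{\uparrow}u$ missing ${\downarrow}v$ contains $\lnot\lnot v=\lnot\lnot u$. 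If $x\in Q$ with $x\wedge c\le v$ and $x\wedge d\le v$, then (Vi) gives $x\wedge(c\vee d)\wedge\lnot\lnot v\le v$, so $Q$ is prime.
\end{enumerate}
Preservation of $\lnot$ by $a\mapsto a_I$ is already covered by your $\to$-argument, since $\lnot x=x\to 0$.

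One caution if you copy the injectivity argument ``word for word'': split cases on whether $\lnot a=\lnot(a\wedge b)$, not on whether $\lnot a=\lnot b$. The implication $\lnot a\neq\lnot b\Rightarrow a_O\nleq b_O$ fails in general. In $\mathbf 3\times\mathbf 2$, with $m$ the middle element of $\mathbf 3$, take $a=(1,0)$ and $b=(m,1)$: then $a\nleq b$ and $\lnot a\neq\lnot b$, yet $\lnot(a\wedge b)=\lnot a$.
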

\proof
It was verified throughout Section \ref{SectProperties} that the principles \textup{(iCl$_1$)}, \textup{(iCl$_2$)}, \textup{(Nu)}, and \textup{(Vi)} can all be obtained as instances of \textup{(iEx)}. 

Conversely, examining the proof of the iEx-Embedding Theorem, we track down all uses of \textup{(iEx)}: We see that the proof in Section \ref{Sectmainthm} only makes use of the principles \textup{(MP)}, \textup{(nS)}, \textup{(iCl$_1$)}, and \textup{(iCl$_2$)}. The principles \textup{(Nu)} and \textup{(Vi)} are used implicitly when appealing to the lattice Ex-Embedding Theorem of \cite{AgMa26}. Finally, the proof makes use of Lemma \ref{LemmaInequalityEx}, which only requires \textup{(M)} and \textup{(MP)}, and Lemma \ref{LemmaWes2}, which again only makes use of \textup{(M)} and \textup{(MP)}, as well as \textup{(W4)}.
\endproof

\subsection{The Lattice of $ICQ$ Logics} \label{laticqsec}

Next, we generalize our previous result by providing an axiomatization for the intersection $\mathsf{L}_O \cap \mathsf{L}_I$ of any pair of a superintuitionistic logic $\mathsf{L}_I$ and a superorthomodular logic $\mathsf{L}_O$. As a corollary, we also obtain a characterization of the lattice of all $ICQ$ logics, i.e., the lattice of all extensions of iEx-logic. A similar result was already established for extensions of Ex-logic, the lattice of which is isomorphic to the product of the lattice of quantum logics and the lattice of extensions of the implication-free fragment of intuitionistic logic \cite[Thm.~6.7]{AgMa26}. The proof of the result presented here follows the same strategy. We start with the following two lemmas. 

\begin{lemma} \label{isolma1}
Let $\mathsf{L}_I$ be a superintuitionistic logic axiomatized by $\{\phi_i \vdash \psi_i\}_{i \in I}$ and $\mathsf{L}_O$ be a superorthomodular logic axiomatized by $\{\phi_j \vdash \psi_j\}_{j \in J}$. Then $\mathsf{L}_I \cap \mathsf{L}_O$ is axiomatized by extending iEx-logic with the set of axioms \[\{\phi_i \vdash \psi_i \lor \neg \psi_i\}_{i \in I} \cup \{\phi_j \vdash \neg \neg \psi_j\}_{j \in J}.\]
\end{lemma}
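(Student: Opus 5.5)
Let $\mathsf{L}$ be iEx-logic extended by the axioms $\{\phi_i \vdash \psi_i \lor \neg \psi_i\}_{i\in I} \cup \{\phi_j \vdash \neg\neg\psi_j\}_{j\in J}$. We show $\mathsf{L} = \mathsf{L}_I \cap \mathsf{L}_O$ by proving the two inclusions separately.

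\emph{Soundness, $\mathsf{L} \sset \mathsf{L}_I \cap \mathsf{L}_O$.} It suffices to check that every new axiom is valid in every Heyting algebra validating $\mathsf{L}_I$ and in every orthomodular algebra validating $\mathsf{L}_O$; the axioms of iEx-logic are already handled by Lemmas \ref{orthoiexlma}--\ref{heytingexallma}.
\begin{itemize}
\item In a Heyting algebra validating $\mathsf{L}_I$ we have $\phi_i \leq \psi_i \leq \psi_i \lor \neg\psi_i$, and $\phi_j \vdash \neg\neg\psi_j$ holds because $\phi_j \vdash \psi_j$ is classically valid. I will read $\mathsf{L}_O$ as extending orthomodular logic, which extends the classical logic of Boolean algebras with the Sasaki hook. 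Glivenko's Theorem then gives $\phi_j \leq \neg\neg\psi_j$ intuitionistically. This step needs care: I would restate Glivenko for the full signature with $\to$.
\item In an orthomodular algebra validating $\mathsf{L}_O$, $\psi_i \lor \neg\psi_i = 1$, so the first family is trivial. For the second family, $\phi_j \leq \psi_j = \neg\neg\psi_j$ by double-negation elimination.
\end{itemize}

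\emph{Completeness, $\mathsf{L}_I \cap \mathsf{L}_O \sset \mathsf{L}$.} Algebraic completeness for varieties reduces this to the following: for every Ex-algebra $L$ validating the new axioms, the factors $O_L$ and $I_L$ given by the iEx-Embedding Theorem (Theorem \ref{mainthm}) validate $\mathsf{L}_O$ and $\mathsf{L}_I$ respectively. Since $e$ is a subdirect embedding, a sequent of $\mathsf{L}_I\cap\mathsf{L}_O$ is then valid in $O_L \times I_L$, hence in $L$.

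\emph{The factor $O_L$.} Because $a \mapsto a_O$ is a surjective fundamental algebra homomorphism, any valuation into $O_L$ lifts to a valuation $v$ into $L$. From $v(\phi_j) \leq \neg\neg v(\psi_j)$ we get $[\phi_j]_O \leq \neg\neg[\psi_j]_O = [\psi_j]_O$, so $O_L$ validates the axioms of $\mathsf{L}_O$ and hence all of $\mathsf{L}_O$.

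\emph{The factor $I_L$.} This is the main obstacle. Lift a valuation to $L$ via the surjection $a \mapsto a_I$. From $\phi_i \leq \psi_i \lor \neg\psi_i$ in $L$ we get $[\phi_i]_I \sset [\psi_i]_I \cup [\neg\psi_i]_I$. Fix a prime filter $P \in [\phi_i]_I$; we must show $P \in [\psi_i]_I$. The only problematic case is $\neg\psi_i \in P$. Here I would use that $\phi_i \vdash \psi_i$ is intuitionistically valid in $\mathsf{L}_I$, so it is not refuted on the up-set of prime filters extending $P$. Concretely, I would mimic the argument of \cite[Thm.~6.7]{AgMa26}: show that $I_L$ embeds into (or is a homomorphic image of) a Heyting algebra built from the subspace of prime filters not containing any $\neg x$ with $x$ ``consistent'', so that $\neg\psi_i \in P$ forces $[\neg\psi_i]_I$ to behave like the Heyting negation. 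If that route fails, I would instead replace $I_L$ by a Heyting algebra $I'_L$ defined via a valuation-wise analysis, and reprove that $e$ remains injective using the Lemma \ref{heytmorphlma} case split: when $\neg a = \neg b$ the separation takes place inside the $I$-part, where $\psi\lor\neg\psi$ collapses to $\psi$.

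This component needs the most care, and I would closely follow the cited lattice-case proof, adapting it via the implication lemmas (Lemmas \ref{restPFT}, \ref{LemmaExEmbeddingiCl}).
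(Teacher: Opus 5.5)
\textbf{There is a genuine gap in the $I_L$ part.} Your soundness argument and your treatment of the factor $O_L$ are correct and agree with the paper's. The factor $I_L$, however, is left as a list of possible routes, none of which is carried out.

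The one concrete step you propose is circular. The claim that $\phi_i \vdash \psi_i$ ``is not refuted on the up-set of prime filters extending $P$'' is precisely the validity of $\phi_i\vdash\psi_i$ on $I_L$, which is what you are trying to prove. Being an axiom of $\mathsf{L}_I$ gives you nothing about $I_L$ until you know $I_L$ is an $\mathsf{L}_I$-algebra. Note also that the full algebra $Dn(P(L))$ need not validate $\mathsf{L}_I$; only the generated subalgebra $I_L$ is at issue. No new embedding and no modified algebra $I'_L$ is needed.

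\textbf{The missing idea.} A (consistent) superintuitionistic logic is contained in classical logic, so each axiom $\phi_i\vdash\psi_i$ is classically valid. Hence $\vdash\neg(\phi_i\wedge\neg\psi_i)$ is classically valid, and by Glivenko's Theorem it is intuitionistically valid. Since $I_L$ is a Heyting algebra, every valuation $V$ into $I_L$ satisfies $V(\phi_i)\wedge\neg V(\psi_i)=0$. You already have $V(\phi_i)\le V(\psi_i)\vee\neg V(\psi_i)$, transferred from $L$ along the surjective homomorphism $a\mapsto a_I$. Distributivity of $I_L$ then gives
\[V(\phi_i)=\big(V(\phi_i)\wedge V(\psi_i)\big)\vee\big(V(\phi_i)\wedge\neg V(\psi_i)\big)=V(\phi_i)\wedge V(\psi_i),\]
so $I_L$ validates every $\phi_i\vdash\psi_i$. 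This is exactly the paper's argument.

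In your prime-filter language, the ``problematic case'' never arises. No prime filter $P$ can contain both $v(\phi_i)$ and $\neg v(\psi_i)$, because $[v(\phi_i)\wedge\neg v(\psi_i)]_I=\emptyset$.

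\textbf{A smaller correction in the soundness step.} The same subclassicality fact, applied to $\mathsf{L}_O$, is what you need there. Your wording has the direction reversed: it is (consistent) orthomodular logic and its extensions that are contained in classical logic, not the other way around. This holds because every nontrivial orthomodular algebra contains $\{0,1\}$ as a subalgebra, and the Sasaki hook coincides with material implication in Boolean algebras.
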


\proof
Fix a superintuitionistic logic $\mathsf{L}_I$ axiomatized by $\{\phi_i \vdash \psi_i\}_{i \in I}$ and a superorthomodular logic $\mathsf{L}_O$ axiomatized by $\{\phi_j \vdash \psi_j\}_{j \in J}$. We first check that the set $\{\phi_i \vdash \psi_i \lor \neg \psi_i\}_{i \in I} \cup \{\phi_j \vdash \neg \neg \psi_j\}_{j \in J}$ is a set of valid axioms in $\mathsf{L}_I \cap \mathsf{L}_O$. Fix some $i \in I$. Clearly, $\mathsf{L}_I$ contains the axiom $\phi_i \vdash \psi_i \lor \neg \psi_i$, since $\phi_i \vdash \psi_i \in \mathsf{L}_I$. Moreover, $\phi_i \vdash \psi_i \lor \neg \psi_i$ follows from the excluded middle, so it also belongs to $\mathsf{L}_O$. Now consider an axiom of the form $\phi_j \vdash \neg \neg \psi_j$ for some $j \in J$. Over orthomodular logic, this axiom is equivalent to $\phi_j \vdash \psi_j$, hence it also belongs to $\mathsf{L}_O$. Moreover, since $\mathsf{L}_O$ is a subclassical logic, $\phi_j \vdash \psi_j$ is classically valid. By Glivenko's theorem, this means that $\phi_j \vdash \neg \neg \psi_j$ is valid intuitionistically, and therefore belongs to $\mathsf{L}_I$, as desired. This shows that $\{\phi_i \vdash \psi_i \lor \neg \psi_i\}_{i \in I} \cup \{\phi_j \vdash \neg \neg \psi_j\}_{j \in J}$ is a set of axioms in $\mathsf{L}_I \cap \mathsf{L}_O$. 

Let now $\mathsf{L}$ be the extension of \textup{iEx} axiomatized by this set of axioms. Clearly, we have that $\mathsf{L} \sset \mathsf{L}_I \cap \mathsf{L}_O$. We now claim that $\mathsf{L}_I \cap \mathsf{L}_O \sset \mathsf{L}$. To show this, suppose that $\phi \vdash \psi$ is not valid in $\mathsf{L}$. Then there is an $\mathsf{L}$-lattice $L$ such that $\phi \vdash \psi$ is not valid in $L$. By the iEx-Embedding Theorem, $L$ subdirectly embeds into $O_L \times I_L$. Since $O_L$ is a homomorphic image of $L$, the axiom $\phi_j \vdash \neg \neg\psi_j$ is valid on $O_L$ for any $j\in J$. But since $O_L$ is an ortholattice, this means that $\phi_j \vdash \psi_j$ is also valid on $O_L$ for any $j \in J$, and therefore $O_L$ is an $\mathsf{L}_O$-lattice. Similarly, we have that $I_L$ is a homomorphic image of $L$, and therefore $\phi_i \vdash \psi_i \lor \neg \psi_i$ is valid in $I_L$ for any $i \in I$. Fix some $i \in I$. Since $\phi_i \vdash \psi_i$ is classically valid, we have that $\vdash \neg (\phi_i \land \neg\psi_i)$ is also classically valid, and thus also intuitionistically valid by Glivenko's theorem. This means that, for any valuation $V$ on $I_L$, we have $V(\phi_i) \leq V(\psi_i) \lor V(\neg\psi_i)$ and $(V(\phi_i) \land \neg V(\psi_i)) \leq 0$. By distributivity, this gives us:
\[V(\phi_i) \leq V(\phi_i) \land V(\psi_i \lor \neg \psi_i) \leq (V(\phi_i) \land V(\psi_i)) \lor (V(\phi_i)\land \neg V(\psi_i)) \leq V(\phi_i) \land V(\psi_i),\] from which it follows that $\phi_i \vdash \psi_i$ is valid on $I_L$ for any $i \in I$. Hence $I_L$ is an $\mathsf{L}_I$-lattice. Since $\phi \vdash \psi$ is not valid on $L$ and embeds into $O_L \times I_L$, $\phi \vdash \psi$ is not valid on $O_L \times I_L$, which means that it is not valid on either $O_L$ or $I_L$. Either way, it follows that $\phi \vdash \psi \notin \mathsf{L}_O \cap \mathsf{L}_I$. This shows that $\mathsf{L}_O \cap \mathsf{L}_I \sset \mathsf{L}$, as desired.
\endproof

The previous lemma shows how to axiomatize the intersection of any pair of a superorthomodular logic and a superintuitionistic logic as an extension of iEx-logic. In fact, every extension of iEx-logic arises as such an intersection of logics, as the following establishes.

\begin{lemma} \label{isolma2}
    Let $\mathsf{L}$ be an extension of $\textup{iEx}$, and let $\mathbb{V}(\mathsf{L})$ be the variety of fundamental algebras on which $\mathsf{L}$ is valid. Let $O_\mathsf{L}$ be the logic of the class of algebras $\{O_L \mid L \in \mathbb{V}(\mathsf{L})\}$ and $I_\mathsf{L}$ be the logic of the class of algebras $\{I_L \mid L \in \mathbb{V}(\mathsf{L})\}$, where, for any Ex-algebra $L$, $O_L$ and $I_L$ are as in the iEx-Embedding Theorem. Then $\mathsf{L} = O_\mathsf{L} \cap I_\mathsf{L}$.
\end{lemma}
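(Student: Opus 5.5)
We prove the two inclusions $\mathsf{L} \sset O_\mathsf{L} \cap I_\mathsf{L}$ and $O_\mathsf{L} \cap I_\mathsf{L} \sset \mathsf{L}$ separately, following the pattern of Lemma \ref{isolma1} and of \cite[Thm.~6.7]{AgMa26}. First note that every $L \in \mathbb{V}(\mathsf{L})$ is an Ex-algebra, since $\mathsf{L}$ extends iEx-logic. So $O_L$ and $I_L$ are defined, and the iEx-Embedding Theorem (Theorem \ref{mainthm}) applies to $L$.

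For $\mathsf{L} \sset O_\mathsf{L} \cap I_\mathsf{L}$, let $\phi \vdash \psi \in \mathsf{L}$ and $L \in \mathbb{V}(\mathsf{L})$. By Lemma \ref{orthomorphlma} and Lemma \ref{heytmorphlma}, the maps $a \mapsto a_O$ and $a \mapsto a_I$ are surjective fundamental algebra homomorphisms. Hence $O_L$ and $I_L$ are homomorphic images of $L$. Validity of an inequality $\phi \leq \psi$ (equivalently, of the equation $\phi \wedge \psi = \phi$) is preserved under homomorphic images. Concretely, any valuation $w$ on $O_L$ lifts along the surjection to a valuation $v$ on $L$ with $w = \pi \circ v$. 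Then $v(\phi) \leq v(\psi)$ gives $w(\phi) \leq w(\psi)$ because $\pi$ is monotone and commutes with all connectives. Thus $\phi \vdash \psi$ is valid on every $O_L$ and every $I_L$, so it lies in $O_\mathsf{L} \cap I_\mathsf{L}$.

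For $O_\mathsf{L} \cap I_\mathsf{L} \sset \mathsf{L}$, suppose $\phi \vdash \psi \notin \mathsf{L}$. The first thing to check, and the main (if mild) obstacle, is that $\mathsf{L}$ is complete with respect to $\mathbb{V}(\mathsf{L})$, so that there is some $L \in \mathbb{V}(\mathsf{L})$ and a valuation $v$ with $v(\phi) \nleq v(\psi)$. I would obtain this from the standard Lindenbaum--Tarski construction. Quotient the formulas by mutual $\mathsf{L}$-derivability. The fundamental logic rules, together with the congruence behaviour of $\to$ built into the logic as an equationally presented system, make this quotient a fundamental algebra validating $\mathsf{L}$, and $\phi \vdash \psi$ fails in it under the canonical valuation. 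Here one should make sure that "extension of iEx" is understood as closed under substitution and under the replacement rules for $\to$, exactly as in the preceding results.

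Given such $L$, the iEx-Embedding Theorem yields a subdirect embedding $e : L \to O_L \times I_L$. Since $e$ is an injective homomorphism of bounded lattices, it is an order-embedding. So $e(v(\phi)) \nleq e(v(\psi))$, and because order on the product is componentwise, either $v(\phi)_O \nleq v(\psi)_O$ or $v(\phi)_I \nleq v(\psi)_I$. In the first case, the valuation $a \mapsto v(a)_O$ refutes $\phi \vdash \psi$ on $O_L$, so $\phi \vdash \psi \notin O_\mathsf{L}$. In the second case, $a \mapsto v(a)_I$ refutes it on $I_L$, so $\phi \vdash \psi \notin I_\mathsf{L}$. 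Either way $\phi \vdash \psi \notin O_\mathsf{L} \cap I_\mathsf{L}$, completing the proof.
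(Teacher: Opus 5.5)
Your proof is correct and takes the same route as the paper's: one inclusion holds because $O_L$ and $I_L$ are homomorphic images of $L$, and the other because the subdirect embedding $e: L \to O_L \times I_L$ reflects any failure of $\phi \vdash \psi$ to one of the factors. The only difference is that you make explicit the completeness of $\mathsf{L}$ with respect to $\mathbb{V}(\mathsf{L})$ via a Lindenbaum--Tarski construction, which the paper uses tacitly.
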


\proof
Clearly, since $O_L \models \mathsf{L}$ and $I_L \models \mathsf{L}$ whenever $L \models \mathsf{L}$, we have that $\mathsf{L} \sset O_\mathsf{L} \cap I_\mathsf{L}$. For the converse, note that, if $\phi \vdash \psi \in O_\mathsf{L} \cap I_\mathsf{L}$, then $\phi \vdash \psi$ is valid on $O_L \times I_L$ for any $L \in \mathbb{V}(\mathsf{L})$. By the iEx-Embedding theorem, $L$ subdirectly embeds into $O_L \times I_L$, so $\phi \vdash \psi$ is valid on $L$. This shows that $O_\mathsf{L} \cap I_\mathsf{L} \sset \mathsf{L}$.
\endproof

As a straightforward consequence of the previous two lemmas, we can provide a characterization of the lattice of extensions of iEx-logic.

\begin{theorem}
    Let $i\mathbf{SI}$ be the lattice of superintuitionistic logics, $i\mathbf{SO}$ the lattice of superorthomodular logics, and $i\mathbf{SE}$ the lattice of $ICQ$ logics. Then $i\mathbf{SE} \simeq i\mathbf{SO} \times i\mathbf{SI}$.
\end{theorem}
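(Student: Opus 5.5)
We define maps $F: i\mathbf{SE} \to i\mathbf{SO} \times i\mathbf{SI}$ and $G: i\mathbf{SO} \times i\mathbf{SI} \to i\mathbf{SE}$ and show that they are mutually inverse and order-preserving. Both lattices are ordered by inclusion, and an order isomorphism between lattices is automatically a lattice isomorphism.

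For $\mathsf{L} \in i\mathbf{SE}$, set $F(\mathsf{L}) = (\mathsf{L} \vee \mathsf{OM}, \mathsf{L} \vee \mathsf{Int})$. Here $\mathsf{OM}$ is orthomodular logic, $\mathsf{Int}$ is intuitionistic logic, and the joins are taken as logics in the full signature. Equivalently, and more convenient for the proof, $F(\mathsf{L})$ is the pair consisting of the logic of the orthomodular members of $\mathbb{V}(\mathsf{L})$ and the logic of the Heyting members of $\mathbb{V}(\mathsf{L})$. Set $G(\mathsf{L}_O, \mathsf{L}_I) = \mathsf{L}_O \cap \mathsf{L}_I$. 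This is an extension of iEx-logic by Theorem \ref{TheoremEquivExalgebras}, since iEx-logic equals $\mathsf{OM} \cap \mathsf{Int} \subseteq \mathsf{L}_O \cap \mathsf{L}_I$. Both $F$ and $G$ are clearly monotone.

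To see $G \circ F = \mathrm{id}$, apply Lemma \ref{isolma2}. Each $O_L$ and $I_L$ with $L \in \mathbb{V}(\mathsf{L})$ is a homomorphic image of $L$, hence lies in $\mathbb{V}(\mathsf{L})$ and is orthomodular or Heyting respectively. It follows that the second description of $F(\mathsf{L})$ satisfies
\[F(\mathsf{L})_O \subseteq O_\mathsf{L} \quad\text{and}\quad F(\mathsf{L})_I \subseteq I_\mathsf{L}.\]
Therefore
\[\mathsf{L} \subseteq F(\mathsf{L})_O \cap F(\mathsf{L})_I \subseteq O_\mathsf{L} \cap I_\mathsf{L} = \mathsf{L},\]
where the first inclusion holds because every algebra involved validates $\mathsf{L}$, and the final equality is Lemma \ref{isolma2}.

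To see $F \circ G = \mathrm{id}$, we must show that the orthomodular algebras validating $\mathsf{L}_O \cap \mathsf{L}_I$ are exactly the $\mathsf{L}_O$-algebras, and dually for Heyting algebras. This is the main step. Use Lemma \ref{isolma1}: $\mathsf{L}_O \cap \mathsf{L}_I$ is axiomatized over iEx by $\{\phi_i \vdash \psi_i \lor \neg\psi_i\}_{i \in I} \cup \{\phi_j \vdash \neg\neg\psi_j\}_{j \in J}$.

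In an orthomodular algebra, each axiom $\phi_i \vdash \psi_i \lor \neg\psi_i$ is trivially valid by excluded middle. Each axiom $\phi_j \vdash \neg\neg\psi_j$ is equivalent to $\phi_j \vdash \psi_j$ by double-negation elimination. So an orthomodular algebra validates $\mathsf{L}_O \cap \mathsf{L}_I$ if and only if it validates $\mathsf{L}_O$.

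In a Heyting algebra, each axiom $\phi_j \vdash \neg\neg\psi_j$ is valid by Glivenko's Theorem, because $\mathsf{L}_O$ is subclassical. Each axiom $\phi_i \vdash \psi_i \lor \neg\psi_i$ is equivalent to $\phi_i \vdash \psi_i$ by the distributivity argument already carried out in the proof of Lemma \ref{isolma1}. That argument needs $\phi_i \land \neg\psi_i \le 0$, which holds because $\mathsf{L}_I$ is consistent and therefore subclassical. So a Heyting algebra validates $\mathsf{L}_O \cap \mathsf{L}_I$ if and only if it validates $\mathsf{L}_I$.

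One degenerate case needs care. If one component is the inconsistent logic, say $\mathsf{L}_I$ is the trivial logic, the corresponding class consists only of the trivial algebra. The statements above still hold there, provided the inconsistent logic is counted in both $i\mathbf{SO}$ and $i\mathbf{SI}$.

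Since $\mathsf{L}_O$ and $\mathsf{L}_I$ are complete with respect to their varieties, it follows that $F(G(\mathsf{L}_O, \mathsf{L}_I)) = (\mathsf{L}_O, \mathsf{L}_I)$. Hence $F$ is a monotone bijection with monotone inverse $G$, and $i\mathbf{SE} \simeq i\mathbf{SO} \times i\mathbf{SI}$.
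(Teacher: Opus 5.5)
Your main argument is correct. It uses the same two ingredients as the paper, Lemma \ref{isolma2} and the per-axiom analysis behind Lemma \ref{isolma1}, but packages them differently.

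The paper considers only $G(\mathsf{L}_O,\mathsf{L}_I)=\mathsf{L}_O\cap\mathsf{L}_I$. It gets surjectivity from Lemma \ref{isolma2} and shows that $G$ reflects the order, using the separating sequents $\phi\vdash\neg\neg\psi$ on the orthomodular side and $\phi\vdash\psi\lor\neg\psi$ on the Heyting side. You instead write down the inverse $F(\mathsf{L})=(\mathsf{L}\vee\mathsf{OM},\mathsf{L}\vee\mathsf{Int})$ and check both composites:
\begin{itemize}
\item $G\circ F=\mathrm{id}$, via Lemma \ref{isolma2} and your observation that $O_L,I_L\in\mathbb{V}(\mathsf{L})$;
\item $F\circ G=\mathrm{id}$, by determining which orthomodular and which Heyting algebras validate $\mathsf{L}_O\cap\mathsf{L}_I$.
\end{itemize}
Your route gives an explicit description of the two coordinates of an ICQ logic, and monotonicity of the inverse comes for free. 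The paper's route is shorter.

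Your degenerate-case paragraph, however, is false and should be replaced by an explicit consistency hypothesis. Write $\mathsf{Incons}$ for the inconsistent logic and $\mathsf{CPC}$ for classical logic. The set $\{0,1\}$ is a two-element Boolean subalgebra of every nontrivial orthomodular, Heyting or Ex-algebra (for Ex-algebras this uses (M)). Hence every consistent logic in the three lattices is contained in $\mathsf{CPC}$.

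Now suppose $\mathsf{Incons}$ is counted in both $i\mathbf{SO}$ and $i\mathbf{SI}$. Then $G(\mathsf{OM},\mathsf{Incons})=\mathsf{OM}=G(\mathsf{OM},\mathsf{CPC})$. Also $F(\mathsf{OM})=(\mathsf{OM},\mathsf{CPC})$, because the Heyting algebras validating $\mathsf{OM}$ are exactly the Boolean algebras. So $F\circ G\neq\mathrm{id}$ there.

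Concretely, your claim that a Heyting algebra validates $\mathsf{L}_O\cap\mathsf{L}_I$ iff it validates $\mathsf{L}_I$ fails for the two-element Boolean algebra with $\mathsf{L}_O=\mathsf{OM}$ and $\mathsf{L}_I=\mathsf{Incons}$. Lemma \ref{isolma1} itself also fails in this case. Its proof, like your Heyting step, needs each $\phi_i\vdash\psi_i$ to be classically valid, which is false for $\top\vdash\bot$.

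No repair is possible in that setting. With $\mathsf{Incons}$ admitted in all three lattices, $i\mathbf{SE}$ has the unique coatom $\mathsf{CPC}$. But $i\mathbf{SO}\times i\mathbf{SI}$ has two coatoms, $(\mathsf{Incons},\mathsf{CPC})$ and $(\mathsf{CPC},\mathsf{Incons})$. The theorem must therefore be read with all three lattices consisting of consistent logics, the intermediate logics of the abstract. The paper's proof tacitly assumes this when it calls the logics subclassical. Under that reading your proof is complete.
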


\proof
Consider the map $(\mathsf{L}_O,\mathsf{L}_I) \mapsto \mathsf{L}_O \cap \mathsf{L}_I$ from the lattice $i\mathbf{SO} \times i\mathbf{SI}$ to the lattice $i\mathbf{SE}$. Clearly, this map is monotone, and it is surjective by Lemma \ref{isolma2}. Hence, it is an isomorphism if we can also show injectivity. Suppose that $\mathsf{L}_I \sset \mathsf{L}_I' \in i\mathbf{SI}$ and $\mathsf{L}_O \sset \mathsf{L}_O' \in i\mathbf{SO}$ are such that $(\mathsf{L}_O,\mathsf{L}_I) \nleq_{i\mathbf{SO}\times i\mathbf{SI}} (\mathsf{L}_O',\mathsf{L}_I')$. Then either $\mathsf{L}_O' \not \sset \mathsf{L}_O$ or $\mathsf{L}_I' \not \sset \mathsf{L}_I$. In the first case, this means that there is an $\mathsf{L}_O$-orthomodular algebra $O$ and an axiom $\phi \vdash \psi \in \mathsf{L}_O'$ that is not valid on $O$. By Lemma \ref{isolma1}, we have that $\phi \vdash \neg \neg \psi$ is not valid on $O$, yet $\phi \vdash \neg \neg \psi \in \mathsf{L}_O' \cap \mathsf{L}_I'$. In the second case, there is a $\mathsf{L}_I$-Heyting algebra $A$ and an axiom $\phi \vdash \psi$ that is not valid on $A$. By \ref{isolma1} again, we have that $\phi \vdash \psi \lor \neg \psi$ is not valid on $A$, yet $\phi \vdash \psi \lor \neg \psi \in \mathsf{L}_O' \cap \mathsf{L}_I'$. Either way, this shows that $\mathsf{L}_O' \cap \mathsf{L}_I' \not \sset \mathsf{L}_O \cap \mathsf{L}_I$, as desired.
\endproof
\section{Adding Modalities} \label{modalsec}
In this final section, we consider extensions of fundamental algebras with modal operators. In particular, we are interested in using the iEx-Embedding Theorem to axiomatize the intersection of a reasonable intuitionistic modal logic and a reasonable quantum modal logic. Intuitionistic modal logic is a sprawling and fast growing field \cite{Si94,SteDePa08}. Prominent candidates to the title of ``minimal intuitionistic modal logic'' include Wijesekera's logic $\mathsf{CK}$ \cite{Wij90} and Fischer Servi's logic $\mathsf{IK}$ \cite{FS84}. The field of quantum modal logics is comparatively less explored \cite{Do09,HoMa24,To25}. Here, we do not intend to make any claim regarding minimal intuitionistic modal logic or minimal quantum modal logic, but we are rather interested in logics whose intersection can be axiomatized using the techniques presented here. We leave a more thorough investigation of constructive quantum modal logics, e.g. along the lines of Holliday's fundamental modal logic \cite{Ho24}, for future work.

From the point of view of the $\textup{iEx}$-Embedding Theorem, the addition of modalities to the signature of fundamental algebras introduces some significant difficulties. In particular, the representation of a fundamental modal algebra as a subalgebra of the product of a quantum modal algebra and an intuitionistic modal algebra obtained as quotients of the original algebra is only possible if the quotient maps preserve the modal operators. Here, we run into some significant issues. In algebras corresponding to Fischer Servi's logic $\mathsf{IK}$ (and, \textit{a fortiori}, in algebras corresponding to the weaker logic $\mathsf{CK}$), it is not always the case that the equivalence relation $\sim$ defined above preserves $\Box$. In other words, $\neg a= \neg b$ does not always imply that $\neg \Box a = \neg \Box b$. Similarly, the proof that the map $a \mapsto a_I$ also preserves modal operators requires a stronger version of the restricted Prime Filter Theorem \ref{restPFT} that may not hold in modal orthomodular lattices. Here, we overcome these issues by restricting ourselves to classes of algebras in which they do not arise, leaving the development of a more elegant solution for future work.

We start with the following definitions. 
\begin{definition}
    A $\Box$-involutive Fischer Servi algebra ($IFS$ algebra for short) is a tuple $(A,\land,\lor,0,1,\to,\Box,\Diamond)$ such that $(A,\land,\lor,0,1,\to)$ is a Heyting algebra and $\Box, \Diamond$ are unary operators satisfying the following:
    \begin{itemize}
        \item[$\textup{(M1)}$] $\Box 1 = 1$, $\Box(a \land b) = \Box a \land \Box b$;
        \item[$\textup{(M2)}$] $\Diamond 0 = 0$, $\Diamond (a \lor b) = \Diamond a \lor \Diamond b$;
        \item[$\textup{(FS1)}$] $\Diamond a \to \Box b \leq \Box (a \to b)$;
        \item[$\textup{(FS2)}$] $\Diamond (a \to b) \leq \Box a \to \Diamond b$;
        \item[$\textup{(I1)}$] $1 \leq \Box(\neg \neg a \to a)$.
    \end{itemize}
\end{definition}

The class of modal Heyting algebras satisfying the first four axioms corresponds to Fischer Servi's logic $\textup{IK}$. As mentioned above, axiom $\textup{(I1)}$ is need to guarantee that $\neg a = \neg b$ implies $\neg \Box a = \neg \Box b$, as evidenced by the Fischer Servi algebra depicted below, where $\neg 1 = \neg b$, but $\Box \neg 1 \neq \Box \neg b$. Blue and red arrows respectively represent the operators $\Diamond$ and $\Box$:
\[\begin{tikzcd}[scale=0.7]
	& 1 \\
	a & b \\
	c & d \\
	0
	\arrow[color={rgb,255:red,92;green,92;blue,214}, curve={height=-12pt}, from=1-2, to=3-2]
	\arrow[no head, from=2-1, to=1-2]
	\arrow[no head, from=2-1, to=3-1]
	\arrow[color={rgb,255:red,92;green,92;blue,214}, curve={height=-6pt}, from=2-1, to=3-2]
	\arrow[no head, from=2-2, to=1-2]
	\arrow[color={rgb,255:red,214;green,92;blue,92}, from=2-2, to=2-1]
	\arrow[no head, from=2-2, to=3-1]
	\arrow[color={rgb,255:red,92;green,92;blue,214}, curve={height=-6pt}, from=2-2, to=3-2]
	\arrow[color={rgb,255:red,214;green,92;blue,92}, curve={height=-6pt}, from=3-1, to=2-1]
	\arrow[color={rgb,255:red,92;green,92;blue,214}, from=3-1, to=3-2]
	\arrow[no head, from=3-1, to=4-1]
	\arrow[color={rgb,255:red,214;green,92;blue,92}, curve={height=-6pt}, from=3-2, to=2-1]
	\arrow[no head, from=3-2, to=2-2]
	\arrow[no head, from=3-2, to=4-1]
	\arrow[color={rgb,255:red,92;green,92;blue,214}, curve={height=-12pt}, from=3-2, to=4-1]
	\arrow[color={rgb,255:red,214;green,92;blue,92}, curve={height=-12pt}, from=4-1, to=2-1]
\end{tikzcd}\]

\begin{definition}
    A $\Diamond$-distributive orthomodular algebra ($\mathrm{DOM}$ algebra for short) is a tuple $(A,\land,\lor,0,1,\to,\Box,\Diamond)$ such that $(A,\land,\lor,0,1,\to)$ is an orthomodular algebra and $\Box, \Diamond$ are unary operators satisfying axioms $\textup{(M1)}$ and $\textup{(M2)}$ as well as the following:
    \begin{itemize}
        \item[$\textup{(M3)}$] $\Diamond \neg a = \neg \Box a$;
        \item[$\textup{(D1)}$] $\Diamond a \land \Box b \leq \Diamond (a \land b)$;
        \item[$\textup{(D2)}$] $\Diamond (a \land (b\lor c)) \leq \Diamond (a \land b) \lor \Diamond (a \land c)$.
    \end{itemize}
\end{definition}

Here again, the definition is more restrictive than in standard modal orthologic (see, e.g., \cite{HoMa24}). Axiom $\textup{(D1)}$ is a principle of positive modal logic, and $\textup{(D2)}$ amounts to assuming a form of ``modal distributivity'' which we will need later on. We now define the class of fundamental modal algebras that we will focus on from now on.

\begin{definition}
    A modal $\textup{iEx}$ algebra ($\textup{miEx}$ algebra for short) is a tuple $(A,\land,\lor,0,1,\to,\Box,\Diamond)$ such that $(A,\land,\lor,0,1,\to)$ is an $\textup{iEx}$ lattice and $\Box,\Diamond$ are unary operators satisfying axioms $\textup{(M1)}$, $\textup{(M2)}$, $\textup{(l1)}$, $\textup{(D1)}$, $\textup{(D2)}$ and the following:
    \begin{itemize}
        \item[$\textup{(MCl)}$] $\Box(((a \land b) \lor (a \land c)) \to d) \leq \Box((a \land (b \lor c)) \to d) \lor \Diamond(a \land b) \lor \Diamond(a \land c)$;
        \item[$\textup{(M4)}$] $\Diamond \neg a \leq \neg \Box a$;
        \item[$\textup{(M5)}$] $\neg\Diamond a = \Box \neg a$;
        \item[$\textup{(M6)}$] $\neg \Diamond \neg a \leq \Box a$;
        \item[$\textup{(FS3)}$] $\Diamond (a \to b) \leq (\Box a \to \Diamond b) \lor \Box a$.
    \end{itemize}
\end{definition}

We first verify that $miEx$ algebras generalize both $\mathrm{DOM}$ algebras and $\mathrm{IFS}$ algebras. 

\begin{lemma} \label{domlma}
    Any $\mathrm{DOM}$ algebra is a $\textup{miEx}$ algebra.
\end{lemma}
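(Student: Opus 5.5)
Since a DOM algebra is an orthomodular algebra with $\Box,\Diamond$, Lemma~\ref{orthoiexlma} and Lemma~\ref{orthoexallma} already make its reduct an Ex-algebra. Axioms (M1), (M2), (D1), (D2) hold by definition. What remains is to verify (I1), (MCl), (M4), (M5), (M6) and (FS3), and throughout I will use double negation elimination $\neg\neg a=a$, the Sasaki form $a\to b=\neg a\vee(a\wedge b)$, and (M3).

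The purely modal identities are the easy ones. For (I1), $\neg\neg a=a$ gives $\neg\neg a\to a=a\to a=1$ by Lemma~\ref{LemmaInequalityEx}, and then $\Box 1=1$. Instantiating (M3) at $\neg a$ and using involution gives $\Diamond a=\neg\Box\neg a$, hence $\neg\Diamond a=\Box\neg a$, which is (M5). (M3) itself gives (M4). For (M6), $\neg\Diamond\neg a=\neg\neg\Box a=\Box a$.

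For (MCl), put $p=(a\wedge b)\vee(a\wedge c)$ and $q=a\wedge(b\vee c)$, so $p\le q$. If $p=q$, the first disjunct already bounds the left-hand side. In general I will not aim for that. Instead I will bound $\Box(p\to d)$ using (D1), (D2) and (M3).
\begin{itemize}
\item Set $x=\Box(p\to d)$ and $y=\Diamond(a\wedge b)\vee\Diamond(a\wedge c)$. By (D2), $\Diamond q\le y$.
\item It suffices to show $x\wedge\neg y\le\Box(q\to d)$, since then orthomodularity will be used to pass from this to $x\le\Box(q\to d)\vee y$.
\item We have $\neg y\le\neg\Diamond q=\Box\neg q$ by (M5), and $\neg q\le q\to d$, so $\neg y\le\Box(q\to d)$.
\end{itemize}
This makes the bound on $x\wedge\neg y$ immediate. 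The catch is that the conclusion $x\le\Box(q\to d)\vee y$ does not follow in a non-distributive lattice from $x\wedge\neg y\le z$. So the plan is instead to prove the stronger fact $\neg y\le\Box(q\to d)$, which gives $1=y\vee\neg y\le y\vee\Box(q\to d)$. Then (MCl) holds trivially, since its right-hand side equals $1$.

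(FS3) is where I expect the real obstacle. The same device fails here: it would need $\neg\Box a\le\Box a\to\Diamond b$, and this has no obvious reason to hold. I will first compute $\Box a\to\Diamond b=\neg\Box a\vee(\Box a\wedge\Diamond b)$, so that
\[(\Box a\to\Diamond b)\vee\Box a=\neg\Box a\vee\Box a\vee(\Box a\wedge\Diamond b)=1.\]
This makes (FS3) valid outright, so no argument on the left-hand side is needed. The genuinely delicate point is therefore (MCl), and the trick for it is the one above: show $\neg y\le\Box(q\to d)$ via (D2) and (M5), so that excluded middle $y\vee\neg y=1$ makes the right-hand side equal to $1$.
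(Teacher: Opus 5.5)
Your proof is correct and follows the paper's argument: (I1) and (M4)--(M6) come straight from (M3) and involution, (MCl) comes from bounding $\neg\bigl(\Diamond(a\wedge b)\vee\Diamond(a\wedge c)\bigr)\le\neg\Diamond q=\Box\neg q\le\Box(q\to d)$ and then using excluded middle (the paper bounds $\neg\Diamond q$ directly and applies (D2) afterwards, which is the same step), and (FS3) comes from excluded middle after writing $\Box a\to\Diamond b$ in Sasaki form. One small correction: the inequality $\neg\Box a\le\Box a\to\Diamond b$, which you say has ``no obvious reason to hold'', is immediate from that Sasaki form, and it is exactly how the paper phrases the (FS3) step, so your route there is the paper's route.
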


\proof
Note first that $\textup{(l1)}$ follows from $\textup{(M1)}$ together with $1 \leq a \to a =\neg\neg a \to a$, which holds because $\neg$ is an involution. Moreover, $\textup{(M4)}$, $\textup{(M5)}$ and $\textup{(M6)}$ all clearly follow from $\textup{(M3)}$ in any ortholattice. For $\textup{(FS3)}$, we have that $\neg \Box a \leq \Box a \to \Diamond b$, so the right-hand side also follows from the excluded middle. Hence it only remains to be shown that $\textup{(MCl)}$ holds. We claim that \[\neg \Diamond (a \land (b\lor c)) \leq \Box((a \land (b \lor c)) \to d).\] By $\textup{(D2)}$, it is enough, since then we have
\[1 \leq \neg\Diamond(a \land (b \lor c)) \lor \Diamond (a \land b \lor c) \leq \Box((a \land (b \lor c)) \to d) \lor \Diamond(a \land b) \lor \Diamond(a \land c).\]
For the proof of the claim, letting $p = (a \land (b \lor c))$, note that \[\neg\Diamond p \leq \Box\neg p \leq \Box(\neg p \lor (p \land d)) \leq \Box (p \to d),\] as desired.
\endproof

\begin{lemma} \label{ifslma}
    Any $\mathrm{IFS}$ algebra is a $\textup{miEx}$ algebra.
\end{lemma}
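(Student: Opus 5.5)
The plan is to check the defining conditions of a miEx algebra one at a time for an IFS algebra $A$. The underlying Heyting algebra is already an Ex-algebra by Lemma \ref{heytingiexlma} and Lemma \ref{heytingexallma}. Axioms (M1), (M2) and (I1) hold by definition. So it remains to verify (D1), (D2), (MCl), (M4), (M5), (M6) and (FS3). Throughout I will use the standard consequences of (M1), (M2) and (FS1)--(FS2): monotonicity of $\Box$ and $\Diamond$, and the residuation-based facts valid in $\mathsf{IK}$-algebras.

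\textbf{(D2).} This is immediate. Distributivity of the Heyting algebra gives $a\land(b\lor c)=(a\land b)\lor(a\land c)$, and (M2) then gives the claim with equality.

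\textbf{(D1).} From (FS2), $\Diamond(b\to (a\land b))\le \Box b\to\Diamond(a\land b)$. Since $a\le b\to(a\land b)$, monotonicity gives $\Diamond a\le \Box b\to \Diamond(a\land b)$, and residuation yields $\Diamond a\land\Box b\le\Diamond(a\land b)$.

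\textbf{(M4).} The inequality $\Diamond\neg a\le\neg\Box a$ is the instance $b=0$ of (FS2), using $\Diamond 0=0$.

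\textbf{(M5).} For $\neg\Diamond a\le\Box\neg a$, take $b=0$ in (FS1) and use $\Box 0$: we get $\Diamond a\to\Box 0\le\Box\neg a$, and $\neg\Diamond a\le\Diamond a\to\Box0$. For the converse, (D1) gives $\Diamond a\land\Box\neg a\le\Diamond(a\land\neg a)=0$, hence $\Box\neg a\le\neg\Diamond a$.

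\textbf{(M6).} Here I use (I1). By (M5), $\neg\Diamond\neg a=\Box\neg\neg a$. By (I1) and (M1), $\Box\neg\neg a=\Box\neg\neg a\land\Box(\neg\neg a\to a)\le\Box a$.

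\textbf{(FS3).} This follows at once from (FS2), since its right-hand side is weaker.

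\textbf{(MCl).} By distributivity, $(a\land b)\lor(a\land c)=a\land(b\lor c)$. The first disjunct on the right is then literally the left-hand side, so the inequality is trivial.

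I do not expect a real obstacle. The only steps needing care are (D1) and (M6). (D1) is the one place where (FS2) has to be combined with residuation. (M6) is the one place where (I1) is genuinely used, and it relies on (M5) having been established first.
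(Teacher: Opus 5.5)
Your proposal is correct and matches the paper's proof almost axiom by axiom: (D1) via (FS2) and residuation, (M4) as the $b=0$ instance of (FS2), the inclusion $\neg\Diamond a\le\Box\neg a$ via (FS1), (M6) via (M5) and (I1), and (MCl), (D2), (FS3) as immediate from distributivity, (M2) and (FS2). The only small differences are that you derive $\Box\neg a\le\neg\Diamond a$ from (D1), using $\Diamond(a\land\neg a)=\Diamond 0=0$, instead of from (M4) applied to $\neg a$, and that you explicitly note the underlying Heyting algebra is an Ex-algebra, which the paper leaves implicit; both are fine.
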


\proof
For $\textup{(D1)}$, note first that $\textup{(M2)}$ and $a \leq b \to (a\land b)$ yield $\Diamond a \to \Diamond(b \to (a \land b))$. By $\textup{(FS2)}$, this means that \[\Diamond a \leq \Diamond(b \to (a \land b)) \leq \Box b \to \Diamond(a \land b),\] and $\textup{(D1)}$ now follows from residuation. $\textup{(D2)}$ clearly follows from distributivity and $\textup{(D2)}$.
For $\textup{(MCl)}$, this follows from \[\Box(((a \land b) \lor (a \land c)) \to d) \leq \Box((a \land (b \lor c)) \to d),\] which itself holds in any $\textup{IFS}$ algebra by distributivity. For $\textup{(M4)}$, by $\textup{(FS2)}$, we have \[\Diamond (a \to 0) \land \Box a \leq \Box a \land (\Box a \to \Diamond 0) \leq \Diamond 0 \leq 0,\] whence $\Diamond \neg a \leq \neg \Box a$. For $\textup{(M5)}$, notice first that $\textup{(M4)}$ entails that $\Box \neg a \leq \neg \Diamond \neg \neg a \leq \neg \Diamond a$, where the second inequality holds because $\Diamond$ is monotone. For the converse, we have that \[\neg \Diamond a \leq \Diamond a \to \Box 0 \leq \Box(a \to 0) = \Box \neg a.\] This shows that $\textup{(M5)}$ holds. For $\textup{(M6)}$, given $\textup{(M5)}$, we have that $\neg \Diamond\neg a = \Box\neg\neg a$. By $\textup{(l1)}$, $\textup{(M1)}$ and \textit{modus ponens}, we get:
\[\neg \Diamond \neg a \leq \Box \neg \neg a \land \Box(\neg\neg a \to a) \leq \Box(\neg\neg a \land (\neg\neg a \to a)) \leq \Box a ,\] as desired. Finally, $\textup{(FS3)}$ follows from $\textup{(FS2)}$.
\endproof

We will now prove a version of the $\textup{iEx}$-Embedding Theorem for $\textup{miEx}$ algebras. 
\begin{lemma} \label{modlma1}
    Let $L$ be a $miEx$ algebra and $O_L$ its quotient orthomodular algebra. Letting $\Box_Oa_O = [\Box a]_O$ and $\Diamond_Oa_O = [\Diamond a]_O$ for any $a \in L$, we have that the map $a \mapsto a_O$ is a surjective fundamental modal algebra homomorphism from $(L,\Box,\Diamond)$ to $(O_L,\Box_O,\Diamond_O)$. Moreover, $(O_L, \Box_O,\Diamond_O)$ is a $\mathrm{DOM}$ algebra.
\end{lemma}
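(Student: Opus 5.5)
The plan has three steps: show that $\Box_O$ and $\Diamond_O$ are well defined on $O_L$, observe that the homomorphism claim is then automatic, and finally check the $\mathrm{DOM}$ axioms in the quotient.

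\textbf{Well-definedness.} We must show that $\neg a = \neg b$ implies both $\neg\Box a = \neg\Box b$ and $\neg\Diamond a = \neg\Diamond b$. By symmetry it suffices to prove $\neg\Box a \leq \neg\Box b$ and $\neg\Diamond a \leq \neg\Diamond b$.

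For $\Diamond$, axiom $\textup{(M5)}$ gives $\neg\Diamond a = \Box\neg a = \Box\neg b = \neg\Diamond b$ directly.

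For $\Box$, the aim is to show that $\neg\Box a$ depends only on $\neg a$. First, $\neg\neg$ is monotone and $\neg\neg a = \neg\neg b$. Next, $\textup{(I1)}$ gives $\Box(\neg\neg a \to a) = 1$. By $\textup{(M1)}$ and $\textup{(MP)}$ we get
\[\Box\neg\neg a = \Box\neg\neg a \wedge \Box(\neg\neg a\to a) = \Box(\neg\neg a \wedge (\neg\neg a \to a)) = \Box(\neg\neg a \wedge a) \le \Box a.\]
Monotonicity of $\Box$ (from $\textup{(M1)}$) gives $\Box a \le \Box\neg\neg a$, so $\Box a = \Box\neg\neg a = \Box \neg\neg b = \Box b$. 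Hence $\neg\Box a = \neg\Box b$, and in fact $\Box$ itself is constant on $\sim$-classes.

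Note that only $\textup{(I1)}$, $\textup{(M1)}$, $\textup{(MP)}$ and $\textup{(M5)}$ are used here. This settles well-definedness, which I expect to be the main obstacle, since it is exactly the point the authors flagged as failing for $\mathsf{IK}$.

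\textbf{Homomorphism.} Once $\Box_O$ and $\Diamond_O$ are well defined, the map $a\mapsto a_O$ preserves them by definition. Combined with Lemma \ref{orthomorphlma}, it is therefore a surjective fundamental modal algebra homomorphism.

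\textbf{$\mathrm{DOM}$ axioms.} Lemma \ref{orthomorphlma} already gives that $O_L$ is an orthomodular algebra. Axioms $\textup{(M1)}$, $\textup{(M2)}$, $\textup{(D1)}$ and $\textup{(D2)}$ are equations or inequalities holding in $L$, so they transfer to the homomorphic image. Inequalities transfer as well, since $a\le b$ in $L$ implies $a_O\le b_O$.

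It remains to verify $\textup{(M3)}$, $\Diamond_O\neg a_O = \neg\Box_O a_O$, in $O_L$.
\begin{itemize}
\item The inequality $\le$ is the image of $\textup{(M4)}$.
\item For $\ge$, apply $\textup{(M6)}$ to $\neg a$: this gives $\neg\Diamond\neg\neg a \le \Box\neg a$.
\item Negating, $\neg\Box\neg a \le \neg\neg\Diamond\neg\neg a$. In $O_L$, where $\neg$ is an involution, this reads $\neg\Box_O\neg a_O\le \Diamond_O a_O$, using $(\neg\neg a)_O = a_O$.
\item Replacing $a$ by $\neg a$ and using involutivity again yields $\neg\Box_O a_O \le \Diamond_O\neg a_O$.
\end{itemize}

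Alternatively, one can negate $\textup{(M5)}$ in $O_L$ to get $\Diamond_O a_O = \neg\Box_O\neg a_O$ and substitute $\neg a$ for $a$, which gives $\textup{(M3)}$ immediately. I would present whichever derivation is cleaner, probably the one via $\textup{(M5)}$, noting that $\textup{(M6)}$ is then only needed for the Heyting side.
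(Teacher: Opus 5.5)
Your proof is correct, but the key step goes a different way from the paper's. The paper never invokes $\textup{(I1)}$ in this lemma. It uses $\textup{(M6)}$ with antitonicity for one direction, and $\textup{(M4)}$ with $\neg\neg\neg = \neg$ for the other, to establish the identity $\neg\Box a = \neg\neg\Diamond\neg a$. The right-hand side visibly depends only on $\neg a$, which gives the $\Box$-congruence. After applying the involution in $O_L$, the same identity is reused to obtain $\textup{(M3)}$.

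You instead use $\textup{(I1)}$, $\textup{(M1)}$ and $\textup{(MP)}$ to show $\Box a = \Box\neg\neg a$. So $\Box$ itself, and not merely $\neg\Box$, is constant on $\sim$-classes, and you get $\textup{(M3)}$ from $\textup{(M5)}$ alone via involutivity.

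What each route buys:
\begin{itemize}
\item The paper's route is economical, since one identity does double duty.
\item Yours proves a stronger invariance and needs neither $\textup{(M4)}$ nor $\textup{(M6)}$ for this lemma.
\item Yours also makes explicit the role the paper informally assigns to $\textup{(I1)}$, namely guaranteeing that $\neg a = \neg b$ implies $\neg\Box a = \neg\Box b$.
\end{itemize}

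Your computation actually shows more than your closing remark suggests. Instantiating $\textup{(M5)}$ at $\neg a$ and using $\Box\neg\neg a = \Box a$ gives $\neg\Diamond\neg a = \Box a$ in every $\textup{miEx}$ algebra. So $\textup{(M6)}$ is derivable from the remaining axioms; it is not an independent requirement for the Heyting side.
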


\proof
By Lemma \ref{orthomorphlma}, we know that $O_L$ is an orthomodular algebra and that the map $a \mapsto a_O$ is a fundamental algebra homomorphism. We first show that it also preserves modalities. Let $a, b \in L$ be such that $a \sim b$. Then \[\neg \Diamond a = \Box \neg a=\Box \neg b = \neg \Diamond b,\] where the first and last equalities hold by $\textup{(M5)}$. This show that $\Diamond a \sim \Diamond b$. Moreover, we claim that $\neg \Box a = \neg\neg \Diamond \neg a$. The left-to-right direction follows from $\textup{(M6)}$ and the fact that $\neg$ is antitone, and, for the converse, by $\textup{(M4)}$, we have that $\neg \neg \Diamond \neg a \leq \neg\neg\neg \Box a \leq \neg \Box a$. By the claim, we now have that
\[\neg \Box a = \neg\neg\Diamond \neg a = \neg \neg\Diamond \neg b = \neg \Box b.\] Hence $\sim$ is a congruence relation with respect to $\Box$ and $\Diamond$ as well, which implies that $a \mapsto a_O$ is a fundamental modal algebra homomorphism from $(L,\Box,\Diamond)$ to $(O_L,\Box_O,\Diamond_O)$. 

Finally, we show that $(O_L,\Box_O,\Diamond_O)$ is a $\textup{DOM}$ algebra. Since it is a homomorphic image of $L$, we already know that $D_1$ and $D_2$ hold, so we only need to check $\textup{(M3)}$. This amounts to showing that $\neg\Diamond\neg a = \neg\neg\Box a$, which already follows from the claim above.
\endproof

This gives us the construction of $O_L$. For $I_L$, we first need to introduce the following restriction of Fischer Servi frames \cite{Ch23,FS84}:
\begin{definition}
    An $\mathrm{IFS}$ frame is a triple $(X, \leq, R)$ such that $(X, \leq)$ is a poset and $R$ a relation on $X$ satisfying the following conditions for any $x, y, z \in X$:
    \begin{itemize}
        \item[$\textup{(FC2)}$] $z \leq x$ and $xRy$ together imply that there is $y' \leq y$ such that $z R y'$;
        \item[$\textup{(IFC)}$] $xRy$ and $y \geq z$ together imply $y = z$.
    \end{itemize}
\end{definition}

Note that $\textup{(IFC)}$ is a strengthening of the usual $\textup{(FC1)}$ condition on Fischer Servi frames, which states that $x R y \geq z$ implies that there is $x' \leq x$ such that $x' R z$. As is to be expected, the downward closed subsets on an $\mathrm{IFS}$ frame form an $\mathrm{IFS}$ algebra. 

\begin{lemma} \label{ifsfrmlma}
    Let $(X, \leq, R)$ be an $\mathrm{IFS}$ frame. For any downset $U \sset X$, let $\Box_R U= \{x \in X \mid \forall y,z (x \geq y R z \to z \in U)\}$ and $\Diamond_R U = \{x \in X \mid \exists y: x R y \land y \in U\}$. Then $(Dn(X),\Box_R,\Diamond_R)$ is an $\mathrm{IFS}$ algebra.
\end{lemma}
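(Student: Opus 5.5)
We check the axioms of an $\mathrm{IFS}$ algebra one at a time for $(Dn(X),\Box_R,\Diamond_R)$. It is standard that $Dn(X)$ is a Heyting algebra, with implication $U\to V=\{x\mid \forall y\leq x\,(y\in U\Rightarrow y\in V)\}$. So the work is to show that $\Box_R$ and $\Diamond_R$ map downsets to downsets, and then to verify (M1), (M2), (FS1), (FS2) and (I1).

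\textbf{Closure under downsets.} For $\Box_R U$ this is immediate, since the definition quantifies over all $y\leq x$. For $\Diamond_R U$, suppose $x\in\Diamond_R U$ is witnessed by $xRy$ with $y\in U$, and let $z\leq x$. By (FC2) there is $y'\leq y$ with $zRy'$. Since $U$ is a downset, $y'\in U$, so $z\in\Diamond_R U$.

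\textbf{(M1) and (M2).} These are routine. The operator $\Box_R$ is a universal quantification, so it preserves $1=X$ and binary meets. The operator $\Diamond_R$ is an existential quantification, so it preserves $\emptyset$ and binary unions.

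\textbf{(FS1).} Let $x\in \Diamond_R U\to\Box_R V$ and suppose $x\geq y R z$ with $z\in U$; we must show $z\in U\to V$. Take $w\leq z$ with $w\in U$. By (IFC), $yRz$ and $z\geq w$ give $w=z$, so it suffices to show $z\in V$. Now $y\in\Diamond_R U$, and $y\leq x$, so $y\in\Box_R V$. Since $y\geq yRz$, we get $z\in V$.

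\textbf{(FS2).} Let $x\in\Diamond_R(U\to V)$, witnessed by $xRy$ with $y\in U\to V$. Take $z\leq x$ with $z\in\Box_R U$. By (FC2) there is $y'\leq y$ with $zRy'$. Then $y'\in U$ because $z\geq zRy'$ and $z\in\Box_R U$. Since $y'\leq y\in U\to V$, it follows that $y'\in V$. Hence $z\in\Diamond_R V$, as required.

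\textbf{(I1).} We must show that $\Box_R(\neg\neg U\to U)=X$. So take any $x\geq yRz$; we show $z\in\neg\neg U\to U$. Let $w\leq z$ with $w\in\neg\neg U$. By (IFC), $w=z$, so the cone below $z$ is the singleton $\{z\}$. Therefore $\neg U$ restricted to this cone is $\{z\}$ if $z\notin U$ and empty otherwise. Hence $z\in\neg\neg U$ forces $z\in U$.

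The only step with real content is the use of (IFC) in (FS1) and (I1). In both places, (IFC) makes every $R$-successor a minimal point of the order, so Heyting implication evaluated at such a point behaves classically. I expect (I1) to be the step to handle most carefully, though it is simple once this observation is made. Everything else is the standard Fischer Servi frame argument.
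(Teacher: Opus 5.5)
Your proof is correct and follows the paper's approach. The paper cites the standard Fischer Servi frame result for (M1), (M2), (FS1), (FS2), implicitly using that (IFC) implies (FC1), and checks (I1) exactly as you do, via minimality of $R$-successors; your direct checks of downset-closure, (FS2) via (FC2), and (FS1) via (IFC) simply spell out that cited step. One cosmetic point: in (FS1) you should not assume $z\in U$ at the outset, but nothing is lost, because (IFC) gives $w=z$ for any $w\le z$ with $w\in U$, so $z\in U$ follows.
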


\proof
It is well known that $\textup{(M1)}$, $\textup{(M2)}$, $\textup{(FS1)}$ and $\textup{(FS2)}$ hold on $(Dn(X),\Box_R,\Diamond_R)$ whenever $(X, \leq, R)$ satisfies conditions $\textup{(FC1)}$ and $\textup{(FC2)}$ (see, e.g., \cite{Ch23}). Hence we only need to check that $\textup{(l1)}$ holds. Fix $x \in X$ and a downset $U$. We must show that, whenever $x \geq y R z$, $z \in \neg \neg U \to U$. Since $y R z$, we know that $z$ is minimal, so we only need to verify that either $z \notin \neg \neg U$ or $z \in U$. Again, since $z$ is minimal, we have that, if $z \notin U$, then $z \in \neg U$, so that $z \notin \neg \neg U$, as desired.
\endproof

We can now lift the map $a \mapsto a_I$ to a fundamental modal homomorphism. 

\begin{lemma} \label{modlma2}
    Let $L$ be a $miEx$ algebra. Let $(P(L),\rset, R)$ be the set of all proper prime filters on $L$, ordered by reverse inclusion and endowed with the following relation $R$:
    \[P R Q \Leftrightarrow \big((\Box a \in P \Rightarrow a \in Q) \land (a \in Q \Rightarrow \Diamond a \in P)\big).\] Then $(Dn(P(L)),\Box_R, \Diamond_R)$ is an $\mathrm{IFS}$ algebra. Moreover, the map $a \mapsto a_I$ is a fundamental modal algebra homomorphism.
\end{lemma}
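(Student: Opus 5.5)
The plan has two parts. First I show that $(P(L),\rset,R)$ is an $\mathrm{IFS}$ frame, so that Lemma \ref{ifsfrmlma} makes $(Dn(P(L)),\Box_R,\Diamond_R)$ an $\mathrm{IFS}$ algebra. Then I show that $a\mapsto a_I$ commutes with the modalities, i.e.\ $(\Box a)_I=\Box_R a_I$ and $(\Diamond a)_I=\Diamond_R a_I$. Preservation of the non-modal operations is already given by Lemma \ref{heytmorphlma}. Throughout, recall that in $(P(L),\rset)$ a prime filter $Q$ lies \emph{below} $P$ exactly when $P\sset Q$.

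\textbf{Frame conditions.} For $\textup{(FC2)}$, suppose $P\sset P'$ (so $P'\le P$) and $PRQ$. I must find a prime $Q'\supseteq Q$ with $P'RQ'$. I would consider the filter generated by $Q\cup\{b\mid \Box b\in P'\}$ and the ideal generated by $\{c\mid \Diamond c\notin P'\}$, and check that they are disjoint. The check combines $\textup{(M1)}$, $\textup{(M2)}$ and $\textup{(D1)}$: if $q\wedge b\le c$ with $q\in Q$, $\Box b\in P'$ and $\Diamond c\notin P'$, then $\Diamond q\in P\sset P'$, hence $\Diamond q\wedge\Box b\le\Diamond(q\wedge b)\le \Diamond c$, so $\Diamond c\in P'$, a contradiction. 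A prime $Q'$ separating this filter from the ideal then satisfies $P'RQ'$ by construction.

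For $\textup{(IFC)}$, suppose $PRQ$ and $Z\sset Q$ is prime. I must show $Z=Q$, i.e.\ that $Q$ is inclusion-minimal among prime filters. The route I would try first is to show that $a\in Q$ implies $\neg\neg a\to a$ and $\neg a\vee\neg\neg a$ behave classically at $Q$:
\begin{itemize}
\item $\textup{(I1)}$ gives $\Box(\neg\neg a\to a)=1\in P$, so $\neg\neg a\to a\in Q$.
\item $\textup{(M5)}$ and $\textup{(M6)}$, i.e.\ $\Box\neg a=\neg\Diamond a$ and $\neg\Diamond\neg a\le\Box a$, should let me argue via $P$ that for every $a$ either $a\in Q$ or $\neg a\in Q$.
\end{itemize}
With this dichotomy in hand: if $a\in Q\setminus Z$, the filter generated by $Z\cup\{a\}$ together with Lemma \ref{LemmaExEmbeddingiCl} should force $\neg a\in Z\sset Q$, contradicting semi-complementation.

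\textbf{Preservation of the modalities.} For $\Diamond$, the inclusion $(\Diamond a)_I\supseteq\Diamond_R a_I$ is immediate from the definition of $R$. For the converse, given $\Diamond a\in P$, I need some $Q$ with $PRQ$ and $a\in Q$. I would take the filter generated by $\{a\}\cup\{b\mid\Box b\in P\}$ and the ideal $\{c\mid \Diamond c\notin P\}$; these are disjoint by $\textup{(D1)}$. Then I run the maximal-filter argument of Lemma \ref{restPFT}. Primeness of the maximal filter is where $\textup{(D2)}$ and $\textup{(MCl)}$ enter: they let me split $\Diamond$ of a meet with a join, in place of the role played by $\textup{(iCl$_2$)}$ in Lemma \ref{restPFT}.

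For $\Box$, the inclusion $(\Box a)_I\sset\Box_R a_I$ follows from monotonicity of the prime filters involved and the definition of $R$. For the converse, suppose $\Box a\notin P$. I must find $P'\supseteq P$ and $Q$ with $P'RQ$ and $a\notin Q$. I would first use $\textup{(FS3)}$ together with Lemma \ref{LemmaExEmbeddingiCl} to pass to a suitable extension $P'$ of $P$. Then I build $Q$ from the filter $\{b\mid\Box b\in P'\}$ and the ideal generated by $a$ together with $\{c\mid\Diamond c\notin P'\}$, again checking disjointness with $\textup{(D1)}$ and primeness with $\textup{(MCl)}$.

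\textbf{Main obstacle.} I expect $\textup{(IFC)}$, the minimality of $R$-successors, to be the hardest step. It is the only place where $\textup{(I1)}$ and $\textup{(M6)}$ must do real work, and the dichotomy step is not yet justified: it would need $\neg\Diamond\neg a\in P$ whenever $\neg a\notin Q$, which primeness of $P$ alone does not supply. The second delicate point is the primeness argument in the two prime filter extensions, which must replace Lemma \ref{restPFT} by a modal variant.
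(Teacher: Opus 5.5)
\textbf{There is a genuine gap at \textup{(IFC)}, and the converse $\Box$-inclusion is not actually carried out.}

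\emph{The direction of \textup{(IFC)}.} In $(P(L),\supseteq)$, $Q\ge Z$ means $Q\subseteq Z$. So \textup{(IFC)} asks you to show that $PRQ$ and $Q\subseteq Q'$ force $Q'=Q$, i.e.\ that $R$-successors are inclusion-\emph{maximal}. You set out to prove inclusion-minimality instead, and that is false. Take the $\mathrm{IFS}$ algebra on the chain $0<m<1$ with $\Box$ and $\Diamond$ both sending $0\mapsto 0$ and $m,1\mapsto 1$. There $\{1\}\,R\,\{m,1\}$, yet $\{1\}\subsetneq\{m,1\}$ is prime. Your final step also fails in this example: $a=m\in Q\setminus Z$, but $\neg m=0\notin Z$.

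\emph{The dichotomy.} You leave it unjustified, as you note, and \textup{(M5)}/\textup{(M6)} are not the right tools. The missing idea is to apply your own first bullet to $a\vee\neg a$ rather than to $a$:
\begin{itemize}
\item In any fundamental lattice, $\neg(a\vee\neg a)\le\neg a\wedge\neg\neg a=0$, so $\neg\neg(a\vee\neg a)=1\in Q$.
\item By \textup{(I1)} and $PRQ$, $\neg\neg(a\vee\neg a)\to(a\vee\neg a)\in Q$.
\item \textup{(MP)} then gives $a\vee\neg a\in Q$, and primeness yields $a\in Q$ or $\neg a\in Q$ for every $a$.
\end{itemize}
Now any $a\in Q'\setminus Q$ would put $a\wedge\neg a=0$ into $Q'$, so $Q'=Q$. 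No appeal to Lemma~\ref{LemmaExEmbeddingiCl} is needed.

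\textbf{The converse $\Box$-inclusion.} You do not say which implication fails in $P$ so that Lemma~\ref{LemmaExEmbeddingiCl} applies, and \textup{(FS3)} is the wrong tool. Your direct construction, with $a$ placed in the ideal, also does not close as sketched. In the \textup{(MCl)} primeness check, the cases $\Diamond(q\wedge c_i)\in P'$ yield no contradiction once the ideal contains $a$.

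The paper's route uses \textup{(M6)}:
\begin{itemize}
\item $\Box a\notin P$ gives $\Diamond\neg a\to 0=\neg\Diamond\neg a\notin P$.
\item Lemma~\ref{LemmaExEmbeddingiCl} then gives a prime $P'\supseteq P$ with $\Diamond\neg a\in P'$.
\item The $\Diamond$-clause (which you handle essentially as the paper does) gives $Q$ with $P'RQ$ and $\neg a\in Q$, hence $a\notin Q$.
\end{itemize}

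\textbf{Your \textup{(FC2)} argument is correct, and simpler than the paper's.} The paper routes disjointness through \textup{(iCl$_1$)} and \textup{(FS3)}. You only need $\Diamond q\in P\subseteq P'$ together with \textup{(D1)}, so \textup{(FS3)} is not needed for this lemma at all. You should make explicit that the prime $Q'$ comes from Lemma~\ref{restPFT}. That lemma applies because your filter contains the prime filter $Q$; there is no general prime filter theorem in this non-distributive setting.
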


\proof
For the first part, by Lemma \ref{ifsfrmlma}, it is enough to show that $(P(L), \rset, R)$ satisfies $\textup{(FC2)}$ and $\textup{(IFC)}$. Let $P,P'$ and $Q$ be prime filters on $L$ such that $P' \rset P$ and $P R Q$. Because $P'$ is a prime filter and $\textup{(M1)}$, $\textup{(M2)}$ hold on $L$, note first that $F=\{a \mid \Box a \in P'\}$ is a filter and $I= \{b \mid \Diamond b \notin P'\}$ is an ideal. Let $F'$ be the filter generated by $Q \cup F$. We claim that $F' \cap I = \emptyset$. If this is true, then, since $L$ is an iEx-algebra, we can apply \ref{restPFT} and obtain a prime filter $Q'$ extending $F'$ and disjoint from $I$. By construction, for such a filter $Q'$ we have that $Q' \rset Q$ and $P'RQ'$, as desired. For the proof of the claim, suppose, towards a contradiction, that we have $c \in Q$, $a \in F$ and $b \in I$ such that $c \land a \leq b$. Then $1 \leq (c \land a) \to b$, which means that $(c \land a)\to b) \in Q$. We must then have $c \land ((c \land a) \to b) \in Q$, so $a \lor (a \to b) \in Q$ by $\mathrm{(iCl_1)}$. Since $Q$ is prime, either $a \in Q$ or $a\to b \in Q$. If $a \in Q$, we have that $(c \land a) \land ((c \land a) \to b)$, and hence $b \in Q$. Since $PRQ$, this implies that $\Diamond b \in P \sset P'$, contradicting our assumption. Hence we may assume that $a \notin Q$. We must then have $(a \to b) \in Q$, and hence $\Diamond(a \to b) \in P$. By $\textup{(FS3)}$, we get that $\Box a \in P$ or $\Box a \to \Diamond b \in P$. In the first case, it follows that $a \in Q$, which we have ruled out already. So $\Box a \to \Diamond b \in P \sset P'$. But since $\Box a \in P'$, this means that $\Diamond b \in P'$, a contradiction. This completes the proof of the claim, and thus that $\textup{(FC2)}$ holds.
For $\textup{(IFC)}$, we argue as follows. Suppose that $P, Q, Q'$ are prime filters such that $P R Q$ and $Q \sset Q'$. Letting $a \in L$, recall first that $1 \leq \neg\neg(a \lor \neg a)$ in any fundamental lattice. By $\textup{(l1)}$, we have that $\Box(\neg\neg (a \lor \neg a) \to (a \lor \neg a)) \in P$, so $\neg \neg (a \lor \neg a) \to (a \lor \neg a) \in Q$. Since we also have $\neg\neg (a\lor \neg a) \in Q$, it follows that $a \lor \neg a \in Q$. Because $Q$ is prime, it follows that $a \in Q$ or $\neg a \in Q$ for any $a \in L$. But this clearly means that $Q \sset Q'$ implies $Q' = Q$. Hence $\textup{(IFC)}$ holds as well.

This shows that $(Dn(P(L)),\Box_R, \Diamond_R)$ is an $\mathrm{IFS}$ algebra. For the second part, we know already that the map $a \mapsto a_I$ is a fundamental algebra homomorphism, but we must show that this map also preserves the modal operators. This amounts to showing the following for any $a \in L$ and any $P \in P(L)$:
\begin{enumerate}
    \item $\Diamond a \in P$ if and only if there is $Q \in P(L)$ such that $a \in Q$ and $P R Q$;
    \item $\Box a \in P$ if and only if, for any $P', Q \in P(L)$ such that $P\sset P'$ and $P' R Q$, $a \in Q$.
\end{enumerate}

We start from the first item. The right-to-left direction is clear by the definition of the relation $R$. For the converse, we assume that $\Diamond a \in P$, and we first claim that the filter $F$ generated by the set $\{a\} \cup \{b \mid \Box b \in P\}$ is disjoint from the ideal $I = \{d \mid \Diamond d \notin P\}$. Assume towards a contradiction that we have $a \land b \leq d$ for some $d \in I$ and $b$ such that $\Box b \in P$. By $\textup{(M2)}$, we get that $\Diamond (a \land b) \leq \Diamond d$. Moreover, by $\textup{(D1)}$, we have that $\Diamond a \land \Box b \leq \Diamond (a \land b) \leq \Diamond d$. Since $\Diamond a$ and $\Box b$ belong to $P$ by assumption, this means that $\Diamond d \in P$, a contradiction. Now let $Q$ be a filter extending $F$ that is maximal among those disjoint from $I$. We claim that $Q$ is prime. If true, this clearly completes the proof of the first item above. For the proof of the claim, suppose that $c_1, c_2 \notin Q$. By a standard argument, there must be $d \in I$ and $b$ such that $\Box b \in P$ such that:
$(a \land b) \land c_1 \leq d$ and $(a \land b) \land c_2 \leq d$. By standard reasoning using $\textup{(M1)$, $\textup{(M2)}}$ and Lemma \ref{LemmaInequalityEx}, this implies that: 
\begin{itemize}
    \item $1 \leq \Box((((a \land b) \land c_1) \lor ((a \land b) \land c_2)) \to d)$;
    \item $\Diamond((a \land b) \land c_1)\leq \Diamond d$;
    \item $\Diamond((a \land b) \land c_2)\leq \Diamond d$.
\end{itemize}
Together with $\textup{(MCl)}$, the first inequality implies that
\[\Box (((a \land b ) \land (c_1 \lor c_2)) \to d) \lor \Diamond ((a\land b) \land c_1) \lor \Diamond((a \land b) \land c_2)\] is in $P$. Since $P$ is prime, one of $\Box (((a \land b ) \land (c_1 \lor c_2)) \to d)$, $\Diamond ((a\land b) \land c_1)$ or $\Diamond ((a\land b) \land c_2)$ must belong to $P$. In the first case, this implies that $((a \land b) \land (c_1 \lor c_2)) \to d \in Q$. Since $a \land b \in Q$ and $d \notin Q$, this means that $c_1 \lor c_2 \notin Q$. In the second and third case, the two inequalities above imply that $\Diamond d  \in P$, a contradiction. Therefore $c_1 \lor c_2 \notin Q$, and $Q$ is prime, as desired. 

Finally, we show that item $2$ above also holds. The left-to-right direction clearly follows from the definition of $R$. For the converse, suppose that $\Box a \notin P$. By $\textup{(M6)}$, this means that $\neg \Diamond \neg a \notin P$. Since $L$ is an $\textup{Ex}$-algebra, we can apply Lemma \ref{LemmaExEmbeddingiCl}, and obtain a prime filter $P'$ extending $P$ and containing $\Diamond \neg a$. By the proof of item $1$ above, $\Diamond \neg a \in P'$ implies that there is a prime filter $Q$ such that $\neg a \in Q$ and $P' R Q$. Clearly, $a \notin Q$, which is what we needed. This completes the proof.
\endproof

Putting Lemmas \ref{modlma1} and \ref{modlma2} together, we can now prove a modal version of the $\textup{iEx}$-Embedding Theorem.
\begin{theorem}
    Let $L$ be a $miEx$ algebra. Then there are a $\mathrm{DOM}$ algebra $O_L$, an $\mathrm{IFS}$ algebra $I_L$ and a subdirect fundamental modal algebra embedding $e: L \to O_L \times I_L$. 
\end{theorem}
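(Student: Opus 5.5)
The plan is to assemble the modal embedding exactly as in the proof of the iEx-Embedding Theorem (Theorem \ref{mainthm}), using Lemmas \ref{modlma1} and \ref{modlma2} to supply the modal structure on each factor.

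\emph{The orthomodular factor.} Take $O_L$ to be the quotient of $L$ by $a\sim b \Leftrightarrow \neg a=\neg b$, equipped with $\Box_O$ and $\Diamond_O$. By Lemma \ref{modlma1}, this is a $\mathrm{DOM}$ algebra, and $a\mapsto a_O$ is a surjective fundamental modal algebra homomorphism.

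\emph{The intuitionistic factor.} Let $I_L$ be the subalgebra of the $\mathrm{IFS}$ algebra $(Dn(P(L)),\Box_R,\Diamond_R)$ from Lemma \ref{modlma2} whose universe is the image $\{a_I\mid a\in L\}$.
\begin{itemize}
\item By Lemma \ref{heytmorphlma}, $a\mapsto a_I$ preserves $\wedge,\vee,\to,0,1$, so the image is closed under the Heyting operations.
\item By Lemma \ref{modlma2}, $\Box_R a_I=(\Box a)_I$ and $\Diamond_R a_I=(\Diamond a)_I$, so the image is also closed under the modal operators.
\item The axioms of $\mathrm{IFS}$ algebras are all equations, so they pass to this subalgebra.
\end{itemize}
Hence $I_L$ is an $\mathrm{IFS}$ algebra, and $a\mapsto a_I$ is a surjective fundamental modal algebra homomorphism onto it. I take the image itself, rather than the Heyting subalgebra generated by it, precisely so that surjectivity is automatic.

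\emph{The embedding.} Define $e(a)=(a_O,a_I)$. Since both coordinates are homomorphisms, $e$ is a fundamental modal algebra homomorphism, and its compositions with the two projections are surjective by the previous steps. For injectivity, suppose $a\nleq b$.
\begin{itemize}
\item If $\neg a\neq\neg b$, then $a_O\neq b_O$, hence $e(a)\neq e(b)$.
\item If $\neg a=\neg b$, the second part of Lemma \ref{heytmorphlma} gives $a_I\not\subseteq b_I$.
\end{itemize}
In either case $e(a)\nleq e(b)$, so $e$ is order-reflecting and in particular injective. Thus $e$ is a subdirect embedding.

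\emph{Main obstacle.} The only delicate point is the surjectivity onto an $\mathrm{IFS}$ algebra. In the non-modal setting, $I_L$ was taken to be the Heyting subalgebra generated by the sets $a_I$. In the modal setting, we would need that generated subalgebra to be closed under $\Box_R$ and $\Diamond_R$ and still to be an $\mathrm{IFS}$ algebra. Choosing the image directly avoids this: the homomorphism property from Lemmas \ref{heytmorphlma} and \ref{modlma2} already gives closure under every operation, and surjectivity then holds by construction.
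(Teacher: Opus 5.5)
Your proof is correct and follows the paper's argument: $O_L$ comes from Lemma~\ref{modlma1}, $I_L$ from Lemma~\ref{modlma2}, and injectivity from the same case split as in the iEx-Embedding Theorem. Since $a\mapsto a_I$ already preserves every operation, your image $\{a_I\mid a\in L\}$ is exactly the fundamental modal subalgebra generated by the $a_I$ that the paper takes, so the ``obstacle'' you sidestep does not really arise.

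One small slip: in the case $\neg a\neq\neg b$ you have only shown $e(a)\neq e(b)$, not $e(a)\nleq e(b)$, and indeed $a_O\leq b_O$ can occur there. This still gives injectivity, and order-reflection then follows because $e$ preserves meets.
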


\proof
Fix an $miEx$ algebra $L$. We let $O_L$ be the quotient ortholattice of $L$, and $I_L$ be the fundamental modal subalgebra of $(Dn(P(L)),\Box_R, \Diamond_R)$ generated by sets of the form $a_I$ for $a \in L$. By Lemmas \ref{modlma1} and \ref{modlma2}, $O_L$ is a $\mathrm{DOM}$ algebra and $I_L$ is an $\mathrm{IFS}$ algebra, and both are fundamental modal homomorphic images of $L$. By the same reasoning as the proof of the iEx-Embedding Theorem, the map $e: L \to O_L \times I_L$ given by $e(a) = (a_O,a_I)$ is therefore a fundamental modal algebra subdirect embedding.
\endproof

Finally, by the same reasoning as in the proof of Theorem \ref{TheoremEquivExalgebras}, we now get the following:
\begin{corollary}
    Let $\mathsf{DOM}$ be the modal logic of $\mathrm{DOM}$ algebras and $\mathsf{IFS}$ the modal logic of $\mathrm{IFS}$ algebras. Then $\mathsf{DOM} \cap \mathsf{IFS}$ is the logic of $\textup{miEx}$ algebras.
\end{corollary}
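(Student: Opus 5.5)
The plan is to mirror the proof of Theorem \ref{TheoremEquivExalgebras}, replacing the iEx-Embedding Theorem by its modal version just established. Both inclusions are needed.

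For soundness, I will show that every entailment valid on all $\mathrm{miEx}$ algebras lies in $\mathsf{DOM}\cap\mathsf{IFS}$. Lemma \ref{domlma} shows that every $\mathrm{DOM}$ algebra is a $\mathrm{miEx}$ algebra, and Lemma \ref{ifslma} shows the same for every $\mathrm{IFS}$ algebra. So any $\varphi\vdash\psi$ valid on all $\mathrm{miEx}$ algebras holds on all $\mathrm{DOM}$ algebras and on all $\mathrm{IFS}$ algebras. Here I take $\mathsf{DOM}$ and $\mathsf{IFS}$ to be defined as the logics of these algebra classes, so no separate completeness theorem is required.

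For the converse, suppose $\varphi\vdash\psi$ fails in some $\mathrm{miEx}$ algebra $L$ under a valuation $v$, so that $v(\varphi)\nleq v(\psi)$. The modal embedding theorem gives a $\mathrm{DOM}$ algebra $O_L$, an $\mathrm{IFS}$ algebra $I_L$, and an injective fundamental modal homomorphism $e:L\to O_L\times I_L$. Composing $v$ with $e$ yields a valuation $e\circ v$ into the product. Because $e$ preserves $\wedge,\vee,0,1,\to,\Box,\Diamond$, an induction on formulas shows that $(e\circ v)(\chi)=e(v(\chi))$ for every formula $\chi$.

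Since $e$ is an order-embedding (a lattice homomorphism that is injective), we get $e(v(\varphi))\nleq e(v(\psi))$. The order on the product is componentwise, so the inequality fails in one coordinate. Composing with the corresponding projection gives a valuation $\pi_i\circ e\circ v$ into $O_L$ or into $I_L$ that refutes $\varphi\vdash\psi$. The entailment is therefore not in $\mathsf{DOM}\cap\mathsf{IFS}$.

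I see no real obstacle; the substantive work is already in Lemmas \ref{modlma1} and \ref{modlma2}. One point needs care. The product $O_L\times I_L$ carries componentwise modal operators, and $e$ must commute with them. This holds because each component map $a\mapsto a_O$ and $a\mapsto a_I$ preserves $\Box$ and $\Diamond$, and that is exactly what those two lemmas establish.
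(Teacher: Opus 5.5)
Your proposal is correct and matches the paper's proof. The paper proves this corollary ``by the same reasoning as in the proof of Theorem~\ref{TheoremEquivExalgebras}''. That means Lemmas~\ref{domlma} and~\ref{ifslma} give one inclusion, and the modal embedding theorem (from Lemmas~\ref{modlma1} and~\ref{modlma2}) plus projection onto a refuting coordinate gives the other; your checks that $e$ is an order-embedding commuting with the componentwise modal operators spell out what the paper leaves implicit.
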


\section{Conclusion}

We conclude with some open problems about constructive quantum logics motivated by the results presented here:
\begin{description}
    \item[Question 1] What is the complexity of $\textup{iEx}$-logic?
    \item[Question 2] Are all the axioms listed in Theorem \ref{altaxthm} independent over Holliday's preconditional logic?
    \item[Question 3] Is there a finite axiomatization of $\textup{iEx}$-logic with axioms that contain at most three variables?
    \item[Question 4] What are the joint validities of basic modal orthologic and the implication-free fragment of $\mathsf{IK}$ or $\mathsf{CK}$?
    \item[Question 5] What are the joint validities of basic modal orthomodular logic and $\mathsf{IK}$ or $\mathsf{CK}$?
\end{description}

\subsection*{Acknowledgements}
The authors thank the referees for helpful comments. The first author would also like to acknowledge support of the Austrian Science Foundation (FWF) through grants STA-139 and ESP-3.

\nocite{*}
\bibliographystyle{eptcs}
\bibliography{bibliography}

\end{document}